\newcommand\reallywidehat[1]{%
\savestack{\tmpbox}{\stretchto{%
  \scaleto{%
    \scalerel*[\widthof{\ensuremath{#1}}]{\kern-.6pt\bigwedge\kern-.6pt}%
    {\rule[-\textheight/2]{1ex}{\textheight}}
  }{\textheight}%
}{0.5ex}}%
\stackon[1pt]{#1}{\tmpbox}%
}
\renewcommand{\phi}{\varphi}
\newcommand{\meanv}[1]{\langle#1\rangle}
\newcommand{\mc}[1]{\mathcal{#1}}
\def\be{\begin{equation}}
\def\ee{\end{equation}}
\def\bea{\begin{eqnarray}}
\def\eea{\end{eqnarray}}
\def\ni{\noindent}
\def\nn{\nonumber}
\def\T{\mathbb{T}}
\def\Z{\mathbb{Z}}
\def\N{\mathbb{N}}
\DeclareMathSymbol{\leqslant}{\mathalpha}{AMSa}{"36} 
\DeclareMathSymbol{\geqslant}{\mathalpha}{AMSa}{"3E} 
\DeclareMathSymbol{\eset}{\mathalpha}{AMSb}{"3F}     
\renewcommand{\leq}{\;\leqslant\;}                   
\renewcommand{\geq}{\;\geqslant\;}                   
\DeclareMathOperator{\Id}{Id}
\DeclareMathOperator{\Span}{span}
\def\ie{\textit{i.e. }}
\def\d{\delta}
\def\g{\gamma}
\def\b{\beta}
\def\D{\Delta}
\def\l{\lambda}
\def\s{\sigma}
\def\k{\kappa}
\def\R{\mathbb{R}}
\theoremstyle{plain}
\newtheorem{theorem}{Theorem}[section]
\newtheorem{lemma}[theorem]{Lemma}
\newtheorem{proposition}[theorem]{Proposition}
\theoremstyle{definition}
\theoremstyle{remark}
\newtheorem{remark}[theorem]{Remark}
\numberwithin{equation}{section}
\definecolor{light}{gray}{.9}
\author{Giuseppe Genovese}
\address{Department Mathematik und Informatik, Universit\"at Basel Spiegelgasse 1, CH-4051 Basel, Switzerland}
\email{giuseppe.genovese@unibas.ch}
\author{Renato Luc\`a}
\address{BCAM - Basque Center for Applied Mathematics, 48009 Bilbao, Spain and Ikerbasque, Basque Foundation
for Science, 48011 Bilbao, Spain.}
\email{rluca@bcamath.org} 
\author{Nikolay Tzvetkov}
\address{Department of Mathematics (AGM), University of Cergy-Pontoise, 2, av. Adolphe Chauvin, 95302 Cergy-Pontoise Cedex, FRANCE}
\email{nikolay.tzvetkov@u-cergy.fr}
\title
{Quasi-invariance of Gaussian measures for the periodic Benjamin-Ono-BBM equation}
\date{\today}
\begin{document}
\begin{abstract}
The BBM equation is a Hamiltonian PDE which revealed to be a very interesting test-model to study the transformation property of Gaussian measures along the flow, after~\cite{sigma}. In this paper we study the BBM equation with critical dispersion (which is a Benjamin-Ono type model). We prove that the image of the Gaussian measures supported on fractional Sobolev spaces of increasing regularity are absolutely continuous, but we cannot identify the density, for which new ideas are needed. 
\end{abstract}

\maketitle

\section{Introduction}

We study the equation of the Benjamin-Ono type
\begin{equation}\label{BBM-gamma}
\partial_t u +\partial_t|D_x| u +\partial_x u + \partial_x (u^2)=0, \quad u(0,x)=u_0(x)\,,
\end{equation}
posed on the one dimensional flat torus $\T := \R/2\pi \Z$, where 
$$
|D_x| u (x):=\sum_{n \neq 0}|n|\hat{u}(n)e^{inx}  \,.
$$
In \eqref{BBM-gamma} $u$ is real valued and we shall consider only zero average solutions 
(note that the average is preserved  
under the evolution). The global existence and uniqueness of solutions for \eqref{BBM-gamma} has been obtained in \cite{mammeri} for data in $H^s$ with $s>\frac12$.
We denote by $\Phi_t$ the associated flow and write $u(t) := \Phi_tu$. 
It can be proved that the quantity
\be\label{eq:conservata}
\mathcal E[u]:= \left( \|u\|^2_{L^2} + 4 \pi \|u\|^2_{H^\frac12} \right)^{1/2}\,
\ee
is conserved by the evolution of \eqref{BBM-gamma}; see Lemma 2.4 in \cite{sigma}.
In particular, the norm $H^{\frac12}$ remains bounded in time. 

We are interested in the transformation property along $\Phi_t$ of the Gaussian measures defined as follows. Let $\{h_n\}_{n\in\N}$, $\{l_n\}_{n\in\N}$ be two independent sequences of independent $\mathcal N(0,1)$ random variables. 

Set
$$
g_n:=\begin{cases}
\frac{1}{\sqrt{2}}(h_n+il_n)&n\in\N\\
\frac{1}{\sqrt{2}}(h_n-il_n)&-n\in\N\\
\end{cases}
\,.
$$
We denote by $\g_{s}$ the Gaussian measure on~$H^{s-}$ induced by the random Fourier series 
\begin{equation}\label{Def:GaussmEasure}
\varphi_s(x)=\sum_{n\neq 0}\frac{g_n}{|n|^{s+1/2}}e^{inx}\,
\end{equation}
and $E_s$ the associated expectation value.
We also define
\be\label{Intro:eq:ref-meas}
\tilde\g_{s}(A) :=      E_s[1_{A \cap\{\mc E[u]\leq R\}}] \,.
\ee

Our main result is the quasi-invariance of $\tilde\g_s$ along $\Phi_t$. 
We extend previous achievements of \cite{sigma, BBM1} for $\b>1$ (in the notation of \cite{BBM1}) to the minimal dispersion case $\b=1$. 

We recall that a measure $\mu$ on a space $X$ is quasi-invariant with respect to a map $\Phi : X \to X$ if
its image under $\Phi$ is absolutely continuous with respect to $\mu$. 

\begin{theorem}\label{TH:quasi}
Let $s > 1$ and $R >0$.
Then the measures $\tilde	\g_{s}$ are quasi-invariant along the flow of \eqref{BBM-gamma}. For any $t\in\R$ there is 
$p= p(t, R)>1$ such that the densities of the transported measures lie in $L^{p}(\tilde\g_{s})$. 
\end{theorem}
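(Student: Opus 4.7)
I will follow the frequency-truncation plus energy-estimate framework of \cite{sigma,BBM1}, tracking how the critical dispersion $\beta = 1$ forces the analysis to its threshold.

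\emph{Truncated flow.} Let $E_N := \mathrm{span}\{e^{inx} : 0 < |n| \leq N\}$ and $P_N$ the associated orthogonal projector. On $E_N$, the truncated equation
\[
\partial_t u + \partial_t|D_x| u + \partial_x u + P_N \partial_x (P_N u)^2 = 0
\]
is a Hamiltonian ODE whose flow $\Phi_t^N$ preserves Lebesgue measure on $E_N$ (Liouville) as well as the energy $\mathcal E$. Writing $\rho_N(u) := e^{-\frac12 \|u\|_{H^s}^2}\mathbf{1}_{\{\mathcal E(u) \leq R\}}$ for the (unnormalised) density of $\tilde\gamma_s$ restricted to $E_N$, Lebesgue-preservation of $\Phi_t^N$ yields the explicit representation
\[
f_N(t,u) := \frac{d(\Phi_t^N)_\ast \tilde\gamma_s^N}{d\tilde\gamma_s^N}(u) = \exp\!\left( \int_0^t D_N(\Phi_{-\tau}^N u)\, d\tau\right), \qquad D_N(u) := \frac{d}{dt}\Big|_{t=0}\tfrac12\|\Phi_t^N u\|_{H^s}^2,
\]
since the $\mathcal E$-cutoff is preserved by the flow.

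\emph{Key symbol estimate.} The linear contribution to $D_N$ vanishes because $(1+|D_x|)^{-1}\partial_x$ is skew-adjoint and commutes with $|D_x|^s$. Expanding the remaining nonlinear part in Fourier and symmetrizing,
\[
D_N(u) = \sum_{\substack{n_1+n_2+n_3 = 0 \\ |n_j| \leq N}} \sigma_s(n_1,n_2,n_3) \prod_{j=1}^3 \hat u(n_j), \qquad \sigma_s = -\frac{i}{3} \sum_{j=1}^{3} \frac{n_j |n_j|^{2s}}{1 + |n_j|}.
\]
On the resonance surface, ordering $|n_1| \leq |n_2| \leq |n_3|$, a Taylor expansion of the odd function $x \mapsto x|x|^{2s}/(1+|x|)$ around $n_2$ produces the cancellation
\[
|\sigma_s(n_1,n_2,n_3)| \;\lesssim\; |n_1|\, |n_2|^{2s-1}.
\]
Since $D_N$ lies in the third Wiener chaos of $\gamma_s$, hypercontractivity reduces $L^p(\gamma_s)$ bounds to the $L^2$ one, which by Plancherel becomes
\[
\mathbb{E}_s |D_N|^2 \;\lesssim\; \sum_{n_1+n_2+n_3=0} \frac{|\sigma_s|^2}{\prod_j |n_j|^{2s+1}} \;\lesssim\; \sum_{n_1, n_2} \frac{|n_1|^{1-2s}}{|n_2|^4},
\]
and the last series converges precisely when $s > 1$, uniformly in $N$.

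\emph{From the moment bound to the density, and passage to the limit.} Starting from $\|f_N(t)\|_{L^p(\tilde\gamma_s^N)}^p = \int \exp(p\int_0^t D_N \circ \Phi_\tau^N\, d\tau)\, d\tilde\gamma_s^N$ (change of variables plus Lebesgue-preservation), differentiating in $t$ and applying H\"older with the moment bound of the previous step produces a differential inequality yielding $\|f_N(t)\|_{L^p(\tilde\gamma_s^N)} \leq C(t,R,p)$ uniformly in $N$, provided $p - 1$ is sufficiently small (depending on $t$ and $R$). The local theory of \cite{mammeri} combined with the $\mathcal E$-cutoff (which provides a uniform $H^{1/2}$-bound) gives stability $\Phi_t^N u_N \to \Phi_t u$ in $H^s$ for $\tilde\gamma_s$-a.e.\ $u$, and a weak-$\ast$ compactness / Fatou argument transfers the uniform $L^p$ density bound to the limit.

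\emph{Main obstacle.} The critical dispersion renders $(1+|D_x|)^{-1}\partial_x$ merely bounded rather than smoothing, so the cancellation in $\sigma_s$ is only marginal and the threshold $s > 1$ in the theorem is sharp within this approach: just below it the series $\sum |n|^{1-2s}$ diverges and the Wiener chaos bound collapses. The same borderline regime is what precludes the Fredholm-determinant computation of the density that was available for $\beta > 1$, and explains why we can only certify the transported density abstractly in $L^p(\tilde\gamma_s)$ without an explicit formula.
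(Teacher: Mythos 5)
Your framework is right in outline (truncation, Liouville, Wiener chaos for $F_N$, differential inequality), but there is a genuine gap at the step you gloss over as ``applying H\"older with the moment bound of the previous step.'' Your moment bound is the hypercontractivity estimate $\|D_N\|_{L^p(\gamma_s)}\lesssim p^{3/2}\|D_N\|_{L^2(\gamma_s)}$ (third Wiener chaos), which corresponds to a tail $e^{-c\lambda^{2/3}}$. The Tzvetkov differential inequality $\frac{d}{dt}\tilde\gamma_s(\Phi_t^N A)^{1/p}\lesssim p^{-1}\|F_N\|_{L^p(\tilde\gamma_s)}$ must then be iterated with the choice $p=\log(1/\tilde\gamma_s(A))\to\infty$ to show that zero-measure sets stay zero-measure; if you only have $\|F_N\|_{L^p}\lesssim p^{3/2}$ the admissible time step shrinks like $p^{-1/2}$ and the iteration does not close (one ends up with $\tilde\gamma_s(\Phi_t A)\lesssim e^{-c/t^2}$, which does not vanish as $\tilde\gamma_s(A)\to 0$). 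Keeping $p$ fixed near $1$, as you suggest, is no better: for fixed $p$ the bound $\tilde\gamma_s(\Phi_t^N A)\leq(\tilde\gamma_s(A)^{1/p}+C|t|)^p$ also fails to vanish in the limit $\tilde\gamma_s(A)\to 0$. The whole technical core of the paper (Section~4) is precisely to upgrade the Wiener-chaos tail $e^{-c\lambda^{2/3}}$ to a genuine exponential tail, i.e.\ to $\|F_N\|_{L^p(\tilde\gamma_s)}\lesssim C(R)p$. That upgrade is not free: it uses the deterministic estimate $|F_N|\lesssim\|P_N u\|_{H^s}^2\|\partial_x P_N u\|_{L^\infty}$ (Proposition~\ref{prop:energy}) together with Bernstein/Hoeffding tail bounds for $\|P_N u\|_{H^s}$ and $\|\partial_xP_N u\|_{L^\infty}$ under the energy cut-off (Lemmas~\ref{lemma:subexp-inf}, \ref{lemma:subexpXH}) to control deviations $|F_N|\geq N^{2\upsilon}$, while the Wiener-chaos bound handles deviations below that scale (Propositions~\ref{prop:exp0}--\ref{prop:gamma1-largedev}). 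Your proof omits this entire mechanism; the energy-cutoff $R$ never actually does any work in your argument except to set the stage.

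Two smaller points. First, the Cameron--Martin space of $\gamma_s$ is $H^{s+1/2}$, not $H^s$: $F_N$ must be the time derivative of $\|P_N\Phi_t^N u\|_{H^{s+1/2}}^2$ and the factorised density on $E_N$ has weight $e^{-\frac12\|P_Nu\|_{H^{s+1/2}}^2}$. Second, your claim that the Wiener-chaos series converges ``precisely when $s>1$'' is off: the $L^2(\gamma_s)$ convergence of $F_N$ holds already for all $s>1/2$ (Proposition~\ref{prop:Wick}), and the restriction $s>1$ in Theorem~\ref{TH:quasi} comes from the probabilistic tail estimates of Section~4 (specifically the $\|\partial_xP_Nu\|_{L^\infty}$ bound, Lemma~\ref{lemma:subexp-inf}), not from the second-moment computation. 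Finally, note that the paper does not form the explicit density $f_N(t,u)=\exp\int_0^t D_N\circ\Phi_{-\tau}^N\,d\tau$ and pass to the limit; they argue through the measure estimate $\tilde\gamma_s(\Phi_t A)\lesssim\tilde\gamma_s(A)^{C(R)^{-|t|}}$ (Proposition~\ref{lemma:radiceNLp}), precisely because the convergence of the $f_N$ is what the critical dispersion prevents them from establishing.
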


\begin{remark}\label{Remark:Rtoinfty}
The cut-off $1_{\{\mc E[u]\leq R\}}$ used to define the measure 
$\tilde \g_{s}$ allows to prove not only the quasi-invariance of the transported measure, 
but the fact that it belongs to $L^{p}(\tilde\g_{s})$ for some $p  >1$, at any time. 
On the other hand, if we are only interested to the quasi-invariance of the reference measure 
we can upgrade the result to the limit case $R \to \infty$, namely proving the
quasi-invariance 
of the transport of the measure $\g_s$ under $\Phi_t$, for all $t \in \R$. For details we refer to Section 3.2 of \cite{FS}. 
\end{remark}

The transport of Gaussian measures under given transformations is a classical subject of probability theory, starting from the classical works of Cameron-Martin \cite{CM} for shifts and Girsanov \cite{girs} for non-anticipative maps (\ie adapted). The anticipative (or non-adapted) case is more difficult to deal with and a crucial role is played by the generator of the transformation. Kuo \cite{kuo2} established a Jacobi formula which generalises the Girsanov formula in case of maps with trace-class generator and Ramer \cite{Ramer} extended it to Hilbert-Schmidt generators (for a comparison of the Girsanov and Ramer change of variable formula see \cite{ofer}). Further developments have been achieved in the context of Malliavin calculus, see for instance \cite{ABC1,ABC2, ust}, essentially establishing Jacobi formulas for Gaussian measures in functional spaces for more general classes of maps.

In \cite{sigma} the third author introduced a new method to prove quasi-invariance of Gaussian measures along the flow of dispersive PDEs. This paper triggered a renewed interest in the subject from the viewpoint of dispersive PDEs, which translates into studying the evolution of random initial data (such as Brownian motion and related processes). For recent developments on the topic, see \cite{BT, deb, FS,forl, gauge, BBM1, GLV0, GLV, GOTW,OS,OT1,OT2,OT3,OTT,PTV,phil}, although this list might be not exhaustive. 

The technique of \cite{sigma} permits to treat flow maps whose differential is not in the Hilbert-Schmidt class, thus improving on the classical results. However it is only used to prove absolute continuity of the transported Gaussian measure without providing an explicit approximation of the density of the infinite dimensional change of coordinates induced by the flow, which is an important open question for many Hamiltonian PDEs and related models. 
Recent progresses in this direction have been made in \cite{NR-BSS11}, \cite{deb}, \cite{BBM1},
where the techniques developed allowed to get an exact formula for the density.  

Identifying the density is one major difference between the present work and \cite{BBM1}. Indeed whereas in \cite{BBM1} we could prove the strong convergence of a sequence of approximating densities, here the minimal amount of dispersion in (\ref{BBM-gamma}) prevents us to employ the same method and the question remains open. This challenging problem presents similarly also for the DNSL gauge group (see discussion below). 
Developing a robust technique to identify the density of the 
transported measures in these kind of problems would represent the completion of the programme started in \cite{sigma}. Another important difference is that here we work without the exponential cut-off on the $H^s$ norm, which was central in our previous work \cite{BBM1}, and this introduces a number of technical difficulties. In particular we have to use finer probabilistic estimates. 

The proof relies on the study of the derivative at time zero of the $H^{s+\frac12}$ Sobolev norm
\be\label{eq:intro-F}
F[u]=\frac{d}{dt}\|\Phi_t u\|^2_{H^{s+\frac12}}\Big|_{t=0}\,
\ee
which plays the key role in the method introduced by the third author in \cite{sigma}. The crucial point of the present work is that this object is not trivial to bound in any sense w.r.t. $\tilde\g_s$. Indeed we prove that it behaves as a sub-exponential random variable, which gives precisely the endpoint estimate on the $L^p(\tilde\g_s)$ norm in order to apply the argument of \cite{sigma}. We faced the same kind of difficulties studying the gauge group associated to the derivative nonlinear Schr\"odinger equation, and indeed our strategy unrolls similarly as in \cite{gauge}. The analogies between the DNSL gauge and the equation (\ref{BBM-gamma}) are interesting. Despite their simplicity, both models are critical for the method of \cite{sigma} in the sense mentioned above: the term \eqref{eq:intro-F} is a sub-exponential random variable w.r.t. the Gaussian measure and, more importantly, it cannot be bounded within its support in terms of Sobolev norms (another notable example is the nonlinear wave equation studied in \cite{OT2, GOTW}, where the renormalisation needed in higher dimension complicates things further). Intuitively, this is due to the minimal amount of dispersion in the models, which for equation (\ref{BBM-gamma}) can be clearly seen by comparison with the usual BBM equation with dispersion parameter strictly greater than one (denoted by $\g$ in \cite{sigma}, $\b$ in \cite{BBM1}).

We conclude discussing the assumption $s >1$. This assumption is only used in Lemma 
\eqref{lemma:subexp-inf}, where is necessary in order for $\|\partial_xu\|_{L^{\infty}}$ to be finite 
$\g_s$-almost surely. 
On the other hand, the other
probabilistic arguments that we used only requires $s >1/2$. It would be certainly interesting trying to relax $s>1$, 
however this
would require new ideas in order to avoid the use of Lemma~\eqref{lemma:subexp-inf}.

The rest of the paper is organised as follows. Section \ref{sect:smoothing} contains the necessary deterministic estimates for of the $H^{s+\frac12}$ norm at $t=0$ ($F$ defined above). In Section \ref{sect:Wick} we prove the convergence in $L^2(\tilde\g_s)$ of suitable truncations of $F$. In Section \ref{sect:prob} we show that $F$, as a random variable w.r.t. $\tilde\g_s$, has an exponential tail. Finally in Section \ref{section:quasi1} we ultimate the proof of the main result.

{\bf Notations.} 
Given a function $f : \T \to \R$ with zero average, we define its Sobolev norm $H^{s}$ as 
$$
\| f \|_{H^s}^2 := 
\sum_{n=1}^{\infty}|n|^{2 s}|\hat{f}(n)|^2\,.
$$
Note that with this definition the norms $L^2$ and $H^0$ differs by a factor $2 \sqrt{\pi}$. 
This is why this factor appears in \eqref{eq:conservata}. 
A ball of radius $R$ and centred in zero in the $H^s$ topology is denoted by $B^s(R)$. We drop the superscript for $s=0$ (ball of $L^2$). We write $\meanv{\cdot} := (1 + |\cdot|^2)^{1/2}$.
We write $X\lesssim Y$ if there is a constant $c>0$ such that $X\leq cY$ and $X\simeq Y$ if $Y\lesssim X\lesssim Y$. We underscore the dependency of $c$ on the additional parameter $a$ writing $X\lesssim_a Y$. $C,c$ always denote constants that often vary from line to line within a calculation. 
We denote by $P_N$ the orthogonal projection on Fourier modes $\leq N$, namely 
$$
P_N(u)=\sum_{|n|\leq N}\hat{u}(n)e^{inx}\,,
$$
where $\hat{u}(n)$ is the $n$-th Fourier coefficient of $u\in L^2$. 
Also, we denote the Littlewood-Paley projector by $\D_0:=P_{1}$, 
$\D_j:=P_{2^{j}}-P_{2^{j-1}}$, $j \in \N$. We use the standard notation $[A,B] := AB - BA$ to denote the commutator of the operators $A, B$. We will use the following well-known tail bounds for sequences of independent centred Gaussian random variables $X_1,\ldots,X_d$ (see for instance \cite{ver}): 
\be\label{eq_Hoeffing}
P\left(\left| \sum_{i=1}^d |X_i| - E[\sum_{i=1}^d |X_i|] \right| \geq \lambda \right)\leq C\exp\left(-c\frac{\lambda^2}{d}\right)\,
\ee
and the Bernstein inequality
\be\label{eq_Bernstein}
P\left( \left|\sum_{i=1}^d |X_i|^2-E[\sum_{i=1}^d |X_i|^2]\right| \geq \lambda \right)\leq C\exp\left(-c\min\left(\l,\frac{\lambda^2}{d}\,\right)\right)\,.
\ee


\subsection*{Acknowledgements}
R. Luc\`a is supported by the Basque Government under program BCAM- BERC 2018-2021, 
by the Spanish Ministry of Science, Innovation and Universities under the BCAM Severo Ochoa 
accreditation SEV-2017-0718 and by IHAIP project PGC2018-094528-B-I00 (AEI/FEDER, UE). 
N. Tzvetkov is supported by ANR grant ODA (ANR-18-CE40-0020-01). The authors would like to thank the anonymous referees
for many useful comments that helped to improve the presentation (see for instance Remark \ref{Remark:Rtoinfty}).

\section{Smoothing estimates}\label{sect:smoothing}

We will work with the truncated equation
\begin{equation}\label{BBM-gamma-NGamma=1}
\partial_t u+\partial_t|D_x| u+\partial_x u+\partial_xP_N((P_Nu)^2)=0, \quad u(0,x)=u_0(x)\,.
\end{equation}
We denote by $\Phi^N_t$ the associated flow and $\Phi^{N=\infty}_t=\Phi_t$.
The flow $\Phi^N_t$ is well defined, since the global well-posedness 
of \eqref{BBM-gamma-NGamma=1} is clear by the fact that at fixed $N$  
the nonlinear part of the evolution regards only the Fourier modes in $[-N,N]$. One has a local existence time which 
depends on the $L^{2}$ norm on the initial datum (this clearly fails for $N = \infty$), and then one can globalize the
solutions obtained using the Cauchy theorem taking advantage of the invariance of~\eqref{eq:conservata}
under the flow~$\Phi^N_t$; see Lemma~2.4 in \cite{sigma}.

The crucial quantity we deal with is
\be\label{eq:FN}
F_N(t,u) := \frac{d}{dt}
\| P_N \Phi_t^N u\|^2_{H^{s+\frac12}}\,. 
\ee
We will abbreviate $F_N = F_N(0,u)$.

In this section all the integrals are taken over $x \in \T$. We always  omit the $dx$ to simplify the notations.
\begin{proposition}\label{prop:growth-Hs-norm}
We have 
\begin{equation}\label{DI1}
F_N(t,u) =
F_{1,N}(t,u) + F_{2,N}(t,u) + F_{3,N}(t,u),
\end{equation}
where
\begin{equation}\label{Def:F1N}
F_{1,N}(t,u) := \frac{2}{\pi} \int_{\T} \big| |D|^{s}P_N u(t) \big|^2 \partial_x P_Nu(t) ,
\end{equation}
\begin{equation}\label{Def:F2N}
F_{2,N}(t,u) :=  - \frac{4}{\pi} \int_{\T} (|D_x|^{s}P_N u(t))\,
 \big[ |D_x	|^{s}, P_N u(t)  \big] \partial_x P_N u(t),
\end{equation}
\begin{equation}\label{Def:F3N}
F_{3,N}(t,u) := \frac{2}{\pi} \int_{\T} (|D_x|^{s}P_N u(t))\,
  \frac{\partial_x |D_x|^{s}}{1+|D_x|}
 \big((P_Nu(t))^2 \big)\,.
\end{equation}
\end{proposition}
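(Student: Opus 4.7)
The plan is to treat the statement as a purely algebraic identity: differentiate $\|P_N u\|^2_{H^{s+1/2}}$ along the truncated flow, substitute the equation, and rearrange the single resulting integral into the three prescribed shapes $F_{1,N}, F_{2,N}, F_{3,N}$.

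First I would set $v=P_N u$ and apply $P_N$ to \eqref{BBM-gamma-NGamma=1}; since every operator in the equation commutes with $P_N$, this gives $(1+|D_x|)\partial_t v = -\partial_x v - \partial_x P_N(v^2)$. Expanding $\|v\|^2_{H^{s+1/2}}$ by Parseval and differentiating in time, $F_N(t,u)$ becomes, up to a constant dictated by the norm convention, an integral of $|D_x|^{2s+1}\partial_t v$ against $v$. Substituting $\partial_t v$ from the equation, the linear transport contribution vanishes by symmetry (its Fourier symbol $in|n|^{2s+1}/(1+|n|)$ is odd in $n$ while the pairing produces $|\hat v(n)|^2$, which is even). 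In the nonlinear contribution, since $|D_x|^{2s+1}v$ is Fourier-supported in $|n|\leq N$ the outer $P_N$ can be dropped under the integral; I then move one factor of $|D_x|^s$ onto $v$ by self-adjointness.

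The single algebraic trick is the resolvent identity $\frac{|D_x|^{s+1}}{1+|D_x|}=|D_x|^s - \frac{|D_x|^s}{1+|D_x|}$, which separates a ``pure Benjamin--Ono'' piece proportional to $\int \partial_x|D_x|^s(v^2)\,|D_x|^s v\,dx$ from a ``BBM correction'' piece that is already in the form of $F_{3,N}$ up to the constant. To split the Benjamin--Ono piece into $F_{1,N}$ and $F_{2,N}$, I would insert the commutator decomposition $|D_x|^s(v^2)=v\,|D_x|^s v + [|D_x|^s, v]v$: the first summand, after integrating $\partial_x$ off $|D_x|^s v$, collapses to the cubic $\int \partial_x v\,(|D_x|^s v)^2$, which is $F_{1,N}$, while the second is recast via the identity $[|D_x|^s,v]\partial_x v = \tfrac12 \partial_x |D_x|^s(v^2) - v\,\partial_x|D_x|^s v$ (with the residual cubic reabsorbed into $F_{1,N}$) to produce $F_{2,N}$.

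The hard part is purely bookkeeping: tracking signs through several integrations by parts, keeping consistent orientations in the commutator identities, and checking the final constants in front of $F_{1,N}, F_{2,N}, F_{3,N}$. No analytic input is needed --- everything reduces to self-adjointness of real-symbol Fourier multipliers, anti-self-adjointness of $\partial_x$, removal of $P_N$ on low-frequency-supported pairings, and Parseval.
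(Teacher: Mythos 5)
Your outline is correct and follows essentially the same route as the paper: pair $|D_x|^{s+1/2}\partial_t v$ against $|D_x|^{s+1/2}v$, note the linear transport term dies by oddness of its symbol, split off the $F_{3,N}$ contribution with the resolvent identity $\frac{|D_x|}{1+|D_x|}=1-\frac{1}{1+|D_x|}$ (which is what your $\frac{|D_x|^{s+1}}{1+|D_x|}=|D_x|^s-\frac{|D_x|^s}{1+|D_x|}$ becomes after moving one $|D_x|^s$ across), and then peel apart the remaining Benjamin--Ono piece with a commutator. The one point where your bookkeeping is more roundabout than the paper's: the paper applies the Leibniz/commutator decomposition directly to $|D_x|^s(\partial_x v\cdot v)=(|D_x|^s\partial_x v)v+[|D_x|^s,v]\partial_x v$, after which a single integration by parts gives $F_{1,N}$ and $F_{2,N}$ appears verbatim; you instead decompose $|D_x|^s(v^2)=v|D_x|^s v+[|D_x|^s,v]v$ before letting $\partial_x$ fall, which then forces the secondary identity $[|D_x|^s,v]\partial_x v=\tfrac12\partial_x|D_x|^s(v^2)-v\,\partial_x|D_x|^s v$ and the reabsorption of a leftover cubic into $F_{1,N}$. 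The two chains produce the same answer (I checked the constants do close up), but you would save yourself a round of sign-chasing by placing the commutator where the paper does.
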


\begin{proof}
From \eqref{BBM-gamma-NGamma=1} we have 
\begin{equation}\label{En1Preq}
\partial_t   |D_x|^{\sigma}  P_N u(t) =  \frac{|D_x|^{\sigma}}{1 + |D_x| } \left(  -\partial_x P_N u(t) - \partial_x P_N ((P_Nu(t))^2) \right).
\end{equation}
Since
\begin{align}
\int_{\T}  & \left( \frac{|D_x|^{\sigma}}{1+ |D_x| } \left(  \partial_x P_N u(t)    \right)  \right) |D_x|^{\sigma} P_N u(t) 
\\ \nonumber
&=
\int_{\T}   \left(  \partial_x \left( \frac{|D_x|^{\sigma}}{\sqrt{1+ |D_x|} }    P_N u(t)    \right)  \right) \frac{|D_x|^{\sigma}}{\sqrt{1+ |D_x|} } P_N u(t) 
=
\frac{1}{2}  \int_{\T} \partial_x \left( \left( \frac{ |D_x|^{\sigma}}{\sqrt{1 + |D_x|}}  P_N u(t) \right)^2  \right) =0,
\end{align}
pairing \eqref{En1Preq} in $L^{2}$ with $|D_x|^{\sigma} P_N u(t)$ we can compute
\begin{align}\label{PivotB}
\frac{d}{dt}\|P_N u(t) \|_{H^\sigma}^2 
& = - \frac{2}{\pi} \int_{\T} \Big(|D_x|^{\sigma} P_N u(t) \Big)\, \frac{ |D_x|^{\sigma} }{1+|D_x|}\partial_x \left( (P_Nu(t) )^2 \right);
\end{align}
note that on the r.h.s. we can write $\partial_x ( (P_Nu )^2)$ in place of $P_N \partial_x ( (P_Nu )^2)$ by orthogonality.

Choosing $\sigma = s + 1/2$ into  \eqref{PivotB} we get
\begin{align}\nonumber
\frac{d}{dt}\|P_N u(t) \|_{H^{s + 1/2}}^2 
& = - \frac{2}{\pi} \int_{\T} \Big(|D_x|^{s + 1/2} P_N u(t) \Big) \,  \frac{ |D_x|^{s + 1/2}}{1+|D_x| } 
 \partial_x ( (P_N u(t) )^2) .
\end{align}
This implies 
\begin{align}\nonumber
\frac{d}{dt}\|P_N u(t) \|_{H^{s + 1/2}}^2 
& = - \frac{2}{\pi} \int_{\T} \Big( \frac{|D_x|}{1+|D_x|} |D_x|^s P_N u(t) \Big)\, |D_x|^{s}  \partial_x \big( (P_N u(t)  )^2 \big) .
\end{align}
Thus, writing
$$
 \frac{|D_x|}{1+|D_x|} = 1 - \frac{1}{1+|D_x|}
$$
we arrive to
$$
\frac{d}{dt}\|P_N u(t)\|_{H^{s+1/2}}^2=I_1(t)+I_2(t),
$$
where 
\bea
I_1 (t) &=& - \frac{2}{\pi} \int_{\T} (|D_x|^{s}P_N u(t))\,|D_x|^{s}
  \partial_x \big( (P_Nu(t))^2 \big)\nn\\
 &=&  - \frac{4}{\pi} \int_{\T} (|D_x|^{s}P_N u(t))\,|D_x|^{s}
 \big((\partial_x(P_Nu(t))) P_Nu(t) \big)\nn
\eea
and
\bea
I_2 (t)
 &=& \frac{2}{\pi} \int_{\T} \left( \frac{|D_x|^{s}}{1+|D_x|}P_N u(t) \right)\,
 |D_x|^{s}
 \big(\partial_x(P_Nu(t))^2 \big)  \nn\\
& =& 
\frac{2}{\pi} \int_{\T} (|D_x|^{s}P_N u(t))\,
  \frac{\partial_x |D_x|^{s}}{1+|D_x|}
 \big((P_Nu(t))^2 \big)\,.\nn
\eea

Using
$$
|D_x|^s \big( (\partial_x P_N u(t)) (P_N u(t)) \big) = \big( |D_x|^s \partial_x P_N u(t) \big) P_N u(t) + 
\big[ |D_x|^s, P_N u(t) \big] \partial_x P_N u(t)
$$
we can rewrite $I_1$ as
\begin{align}\label{I_1Decomp}
I_1 (t)
& =  - \frac{4}{\pi} \int_{\T} (|D_x|^{s}P_N u(t))\,( |D_x|^{s}
\partial_x P_N u(t)) (P_N u(t)) 
\\ \nonumber
& - \frac{4}{\pi} \int_{\T} (|D_x|^{s}P_N u(t))\,
 \big[ |D_x|^{s}, P_N u(t)  \big] \partial_x P_N u(t).
\end{align}
Integrating by parts we can rewrite the first term on the r.h.s. of \eqref{I_1Decomp} as
\begin{equation}\label{Def:TildeIBetter}
  \frac{2}{\pi} \int_{\T} \big| |D_x|^{s}P_N u(t) \big|^2 \partial_x P_Nu(t) .
\end{equation}
Thus we arrive to
\begin{equation}
I_1 (t)
=    \frac{2}{\pi} \int_{\T} \big| |D_x|^{s}P_N u(t) \big|^2 \partial_x P_Nu(t) 
 - \frac{4}{\pi} \int_{\T} (|D_x|^{s}P_N u(t))\,
 \big[ |D_x|^{s}, P_N u(t)  \big] \partial_x P_N u(t).
\end{equation}

Since $I_1(t) = F_{1,N}(t,u) + F_{2,N}(t,u)$ and $I_2(t) =F_{3,N}(t,u)$ the proof is concluded.
\end{proof}

\begin{proposition}\label{prop:energy}
Let $s \geq 0$. 
The solutions of \eqref{BBM-gamma-NGamma=1} satisfy for all~$t \in \R$: 
\begin{equation}\label{GammaSmoothing}
\left| \frac{d}{dt}\| P_Nu(t) \|_{H^{s+1/2}}^2 \right| \lesssim    \|P_N u(t)\|_{H^s}^2 \|\partial_xP_N u(t)\|_{L^{\infty}} \,.
\end{equation}
\end{proposition}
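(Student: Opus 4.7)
The plan is to apply the decomposition \eqref{DI1} from Proposition \ref{prop:growth-Hs-norm} and estimate each of the three terms $F_{1,N}$, $F_{2,N}$, $F_{3,N}$ separately by the right-hand side of \eqref{GammaSmoothing}.

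The term $F_{1,N}$ is immediate: by H\"older's inequality,
\[
|F_{1,N}(t,u)| \lesssim \||D_x|^{s} P_N u(t)\|_{L^2}^2 \,\|\partial_x P_N u(t)\|_{L^\infty} \lesssim \|P_N u(t)\|_{H^s}^2\,\|\partial_x P_N u(t)\|_{L^\infty}.
\]
For $F_{2,N}$, I would apply Cauchy--Schwarz to extract $\||D_x|^s P_N u\|_{L^2} = \|P_N u\|_{H^s}$ and then invoke the Kato--Ponce commutator estimate
\[
\bigl\|[|D_x|^s,f]\,g\bigr\|_{L^2} \lesssim \|\partial_x f\|_{L^\infty}\|g\|_{H^{s-1}} + \|f\|_{H^s}\|g\|_{L^\infty}
\]
with $f=P_N u$ and $g=\partial_x P_N u$. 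Since $\|\partial_x P_N u\|_{H^{s-1}} = \|P_N u\|_{H^s}$ and, because $P_N u$ has zero mean, $\|P_N u\|_{L^\infty}\lesssim \|\partial_x P_N u\|_{L^\infty}$ by Poincar\'e, both pieces yield the target bound.

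The term $F_{3,N}$ requires a little more care. The key observation is that the Fourier multiplier $\frac{\partial_x|D_x|^s}{1+|D_x|}$ factors as $\frac{|D_x|^s}{1+|D_x|}\partial_x$, and the symbol $\frac{|n|^s}{1+|n|}$ is bounded by $|n|^{s-1}$ for $|n|\geq 1$, so the remaining operator $\frac{|D_x|^s}{1+|D_x|}$ maps $H^{s-1}\to L^2$. Since $\partial_x((P_N u)^2) = 2\,P_N u\cdot \partial_x P_N u$, Cauchy--Schwarz gives
\[
|F_{3,N}(t,u)| \lesssim \|P_N u(t)\|_{H^s}\,\bigl\|P_N u(t)\cdot \partial_x P_N u(t)\bigr\|_{H^{s-1}}.
\]
When $s\geq 1$ I would estimate the product norm by the fractional Leibniz rule, $\|fg\|_{H^{s-1}}\lesssim \|f\|_{H^{s-1}}\|g\|_{L^\infty}+\|f\|_{L^\infty}\|g\|_{H^{s-1}}$, and again use Poincar\'e on $\|P_N u\|_{L^\infty}$; when $0\leq s<1$ one has $s-1<0$, so (since $P_N u$ has zero mean) the $H^{s-1}$ norm is dominated by the $L^2$ norm and a plain H\"older estimate $\|P_N u\cdot\partial_x P_N u\|_{L^2}\lesssim\|P_N u\|_{L^2}\|\partial_x P_N u\|_{L^\infty}$ suffices.

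The main technical subtlety is the case distinction in $F_{3,N}$, together with the application of the Kato--Ponce commutator estimate in $F_{2,N}$; however, both are standard ingredients and the whole bound \eqref{GammaSmoothing} follows by summing the three estimates.
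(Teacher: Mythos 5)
Your proposal is correct and follows the paper's own route: decompose via Proposition \ref{prop:growth-Hs-norm}, then bound $F_{1,N}$ by H\"older, $F_{2,N}$ by Cauchy--Schwarz plus the Kato--Ponce commutator estimate, and $F_{3,N}$ by absorbing the smoothing multiplier and applying a fractional Leibniz estimate together with Poincar\'e. The only (minor) deviation is in $F_{3,N}$: you factor the multiplier as $\frac{|D_x|^s}{1+|D_x|}\circ\partial_x$, which lands you in $H^{s-1}$ and forces a case split between $s\geq1$ and $0\leq s<1$; the paper instead factors it as $\frac{\partial_x}{1+|D_x|}\circ|D_x|^s$, using only that $\frac{\partial_x}{1+|D_x|}$ is bounded on $L^2$, so the product estimate $\|fg\|_{H^s}\lesssim\|f\|_{H^s}\|g\|_{L^\infty}+\|g\|_{H^s}\|f\|_{L^\infty}$ applies uniformly for all $s\geq0$ with no case distinction. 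Both are valid; the paper's factorization is just slightly cleaner.
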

\begin{proof}
By \eqref{DI1} it suffices to show that
\begin{equation}\label{3Bound}
|F_{1,N}(t,u)| + |F_{2,N}(t,u)| + |F_{3,N}(t,u)| \lesssim  \|P_N u(t)\|_{H^s}^2 \|\partial_xP_N u(u)\|_{L^{\infty}} \, .
\end{equation}
This is immediate in the case of $F_{1,N}(t,u)$, by H\"older's inequality. 

 For $F_{2,N}(t,u)$ we use the following commutator estimate from
 \cite{KatoPonce}, valid for $f$ periodic with zero average:
$$
\big\| \big[  |D_x|^s, f \big] g  \big\|_{L^2} \lesssim \| \partial_x f \|_{L^{\infty}} \| g \|_{H^{s-1}} + \| f \|_{H^s} \| g \|_{L^{\infty}} \,.
$$
Since $P_N u(t)$ has zero average we obtain
\begin{align*}\nonumber
\big\| \big[ |D_x|^{s}, P_N u(t) \big] \partial_x P_N u(t)  \big\|_{L^{2}}
& \lesssim
\| \partial_x P_N u(t) \|_{L^{\infty}} \| \partial_x P_N u(t) \|_{H^{s-1}} 
\\ \nonumber&
+ \|  P_N u(t) \|_{H^s} \| \partial_x P_N u(t) \|_{L^{\infty}}
\lesssim \| P_N u(t) \|_{H^{s}} \| \partial_x P_N u(t) \|_{L^{\infty}} , 
\end{align*}
whence
$$
|F_{2,N}(t,u)| \simeq  \left| \int_{\T} (|D_x|^{s}P_N u(t))\,
 \big[ |D_x|^{s}, P_N u(t)  \big] \partial_x P_N u(t) \right|
 \lesssim 
 \| P_N u(t) \|_{H^{s}}^2 \| \partial_x P_N u(t) \|_{L^{\infty}}  \, .
$$

The contribution of $F_{3,N}(t,u)$ is even smaller. Indeed, 
since $\frac{\partial_x }{1+|D_x|}$ is bounded on $L^2$ we have
\begin{align}
|F_{3,N}(t,u)| & 
\simeq  \left| \int_{\T} (|D_x|^{s}P_N u)\,
  \frac{\partial_x |D_x|^{s}}{1+|D_x|} 
 \big((P_Nu)^2 \big) \right| 
 \\ \nonumber
 &
 \lesssim \|P_N u(t)\|_{H^s}\|(P_N u(t))^2\|_{H^s} \lesssim \|P_N u(t)\|_{H^s}^2 \|  P_N u(t)\|_{L^{\infty}}
\end{align}
where we used $\| f g \|_{H^{s}} \lesssim \| f  \|_{H^{s}} \| g \|_{L^{\infty}} + \| g  \|_{H^{s}} \|f \|_{L^{\infty}}$
with $f=g=P_N u(t)$ in the last bound. Then~\eqref{3Bound} for $F_{3,N}(t,u)$ follows 
since $P_N u(u)$ has zero average, thus 
$\|  P_N u(t)\|_{L^{\infty}} \lesssim \|  \partial_x P_N u(t)\|_{L^{\infty}}$.

\end{proof}


\section{Second moment estimates}\label{sect:Wick}

The goal of this section is to prove the $L^2(\g_s)$ convergence of the term $F_N$ (recall once again that we are using the simplified notation $F_N=F_N(0,u)$, namely 
we mean that the time derivative in (\ref{eq:FN}) is evaluated at~$t=0$). This is a crucial result in our paper, as it allows us to exploit the random cancellations in $F_N$ to get bounds on this quantity which appear prohibitive to achieve deterministically.

\begin{proposition}\label{prop:Wick}
For all $N>M$ it holds
\bea
\| F_{N} - F_{M} \|_{L^{2}(\gamma_s)}& \lesssim&\frac{1}{M^{\frac{2s-1}{4}}}\,,\quad s\in(\frac12, \frac32]\label{DecayF-s-piccolo}\\
\| F_{N} - F_{M} \|_{L^{2}(\gamma_s)} &\lesssim& \frac{1}{\sqrt{M}}\,, \quad s>\frac32\label{DecayF-s-grande}\,.
\eea
\end{proposition}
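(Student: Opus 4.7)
My approach is to bound each of the three pieces in the decomposition $F_N - F_M = \sum_{i=1}^3 (F_{i,N} - F_{i,M})$ coming from Proposition 2.1 (specialised to $t=0$) separately in $L^2(\gamma_s)$. The starting point is to insert the Fourier representation $\hat u(n) = g_n/|n|^{s+1/2}$: each $F_{i,N}$ then becomes a trilinear form in the Gaussians $\{g_n\}$ supported on frequencies satisfying $n_1+n_2+n_3=0$ and $0 < |n_i| \le N$. Because the constraints $\sum n_i = 0$ and $n_i \neq 0$ together forbid every Wick pairing within a single triple, such a form has mean zero, and its second moment is obtained by summing over the six cross-pairings between the two triples of Gaussian labels using $E[g_n g_m] = \delta_{n,-m}$.

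For the dominant term $F_{1,N}$ the Fourier symbol is
\begin{equation*}
K_1(n_1,n_2,n_3) = \frac{in_3}{|n_1|^{1/2}|n_2|^{1/2}|n_3|^{s+1/2}},
\end{equation*}
and the Wick expansion reduces $\|F_{1,N} - F_{1,M}\|_{L^2(\gamma_s)}^2$ to Fourier sums of the form
\begin{equation*}
S_M = \sum_{\substack{n_1+n_2+n_3=0,\ |n_i|\le N \\ \max_i |n_i|>M}} \frac{|n_3|^{1-2s}}{|n_1||n_2|}
\end{equation*}
together with symmetric permutations coming from the non-diagonal cross-pairings. I would bound $S_M$ by splitting into three cases according to which of the three frequencies exceeds $M$, performing a Littlewood-Paley decomposition on the largest one, and closing the inner sum by the divisor-type estimate $\sum_{n_1} (\langle n_1\rangle\langle n_1+n_3\rangle)^{-1} \lesssim \langle n_3\rangle^{-1}\log\langle n_3\rangle$. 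The threshold at $s=3/2$ in the statement corresponds to the point at which the outer dyadic series $\sum_{L>M} L^{-\alpha}$ reaches its sharp rate $M^{-1}$ uniformly in $s$.

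The commutator term $F_{2,N}$ is handled analogously using the pointwise estimate $\bigl||n_3|^s - |n_2|^s\bigr| \lesssim \max(|n_2|,|n_3|)^{s-1}|n_1|$, which follows from the mean value theorem combined with the elementary inequality $\bigl||n_3|-|n_2|\bigr| \le |n_2+n_3| = |n_1|$; this gain precisely compensates the extra $|D_x|^s$ factor in the commutator, so $F_{2,N}-F_{2,M}$ satisfies the same $L^2(\gamma_s)$ bound as $F_{1,N}-F_{1,M}$. The remaining piece $F_{3,N}$ contains the smoothing factor $(1+|D_x|)^{-1}$, which inserts an additional negative power in the Fourier symbol; the corresponding Wick sum is essentially absolutely convergent and produces a contribution of order $M^{-1/2}$ for every $s > 1/2$.

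The main obstacle is the resonant regime in which two of the three frequencies are comparable and $\gtrsim M$ while the third is free: there the inner divisor-type sum has length of order the outer dyadic scale, and one must exploit the full decay of $|K_1|^2$ and track logarithmic losses carefully. This is also where the dichotomy in the stated rate originates, since only for $s > 3/2$ does the outer dyadic series sum with enough spare decay to yield the clean $M^{-1/2}$ bound, while for $s \in (1/2, 3/2]$ the balance between the various cross-pairing contributions produces the weaker exponent $(2s-1)/4$.
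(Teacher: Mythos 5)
Your proposal follows the same strategy as the paper: decompose $F_N = F_{1,N}+F_{2,N}+F_{3,N}$ as in Proposition~\ref{prop:growth-Hs-norm}, compute $\|F_{i,N}-F_{i,M}\|_{L^2(\gamma_s)}^2$ via the Wick formula, observe that internal pairings vanish because the zero-sum constraint together with the nonvanishing of the frequencies rules out $n_j=-n_{j'}$ (this is exactly~\eqref{ZeroAverageWick}), and then estimate the resulting Fourier sums by convolution-type bounds. Your uniform mean-value bound $\big||a|^s-|b|^s\big|\lesssim\max(|a|,|b|)^{s-1}\big||a|-|b|\big|$ for $F_2$ is an equivalent repackaging of the paper's two-case estimates~\eqref{EasysMainBound}--\eqref{HardsMainBound}, and the conclusion you sketch for $F_3$ matches Lemma~\ref{Lemma:Lemma:DecayF2}.

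There are, however, two genuine gaps in the sketch. First, the only convolution estimate you cite, the $(1,1)$ divisor bound $\sum_{n_1}(\langle n_1\rangle\langle n_1+n_3\rangle)^{-1}\lesssim\langle n_3\rangle^{-1}\log\langle n_3\rangle$, covers only one of the three cases in your case split, namely when the frequency carrying the weight $|n_3|^{1-2s}$ is the one forced to exceed $M$. In the other two cases the inner sum carries exponents $(1,2s-1)$, and in the off-diagonal pairings it carries exponents $(1,s,s)$; for $s\in(\frac12,1)$ one of these exponents drops below $1$ and the plain divisor bound no longer applies, so the more general weighted convolution estimate developed in Lemma~\ref{lemma:conv1} and Remark~\ref{rmk:conv1} is actually needed. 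Second, you do not estimate the off-diagonal pairings at all. They are not symmetric reshufflings of $S_M$: for instance $\sigma=(1,3,2)$ produces a different symbol leading to $\sum\langle n_1\rangle^{-1}\langle n_2\rangle^{-s}\langle n_3\rangle^{-s}$, and for $s\in(\frac12,\frac32]$ these cross-pairings are exactly what produce the stated rate $M^{-(2s-1)/4}$; without them the claimed decay cannot be established. The overall plan is sound and coincides with the paper's, but these pieces must be worked through explicitly (as the paper does in the $\sigma=(1,3,2)$ computation and the long case enumeration of Lemma~\ref{Lemma:DecayF2}) to close the proof.
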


The above result concerns the full range $s>\frac12$, even though in the rest of the paper only the case $s>1$ will be studied.
Considering $s>\frac12$ here could be however useful in a future attempt to 
extend also the main results of this paper to $s > \frac12$.

We start by a simple result on the decay of discrete convolutions. The way we use these bounds is explained in Remark \ref{rmk:conv1}.

\begin{lemma}\label{lemma:conv1}
Let $M\in\N$. 
Let $x,y>0$ with $x + y>1$. Let $p,q\geq1$ such that $\max(\frac1p,\frac1q)<x$ and $\max(\frac{p-1}{p},\frac{q-1}{q})<y$. Then there is $c=c(p,q,x,y)>0$ such that
\bea
\sum_{n\in\Z}\frac{1}{\meanv{n}^x\meanv{m-n}^y}&\leq& \frac{c}{\meanv{m}^r}\,,\quad r:=\min\left(x-\frac1p,\frac1q-(1-y)\right)\label{eq:conv0}\\
\sum_{|n|\geq M}\frac{1}{\meanv{n}^x\meanv{m-n}^y}&\leq& \frac{c1_{\{|m|\geq\frac{2M}{3}\}}}{\meanv{m}^{x-\frac1p}}+\frac{c}{\meanv{M}^{x-\frac1q}\meanv{m}^{\frac1q-(1-y)}}\,.\label{eq:conv1}
\eea
\end{lemma}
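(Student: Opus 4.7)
Both estimates are convolution-type bounds that I would prove by a single geometric split of the summation range combined with H\"older's inequality; the hypotheses on $p,q,x,y$ are precisely what is needed to keep the auxiliary $\ell^r$-norms finite. I split $\sum_{n\in\Z}$ into
\[
\mathrm{I}:=\{|n-m|\leq |n|/3\},\qquad \mathrm{II}:=\{|n-m|>|n|/3\},
\]
noting that in region~$\mathrm{I}$ one has $3|m|/4\leq |n|\leq 3|m|/2$ (so $\meanv{n}\simeq\meanv{m}$), while in region~$\mathrm{II}$ one checks directly that $|m-n|\geq |m|/4$, hence $\meanv{m-n}\gtrsim\meanv{m}$, uniformly in $n$.

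For \eqref{eq:conv0}, in region~$\mathrm{I}$ I factor $\meanv{n}^{-x}\leq c\,\meanv{m}^{-(x-1/p)}\meanv{n}^{-1/p}$ (using $x>1/p$), pull $\meanv{m}^{-(x-1/p)}$ out, and bound the residual $\sum_n\meanv{n}^{-1/p}\meanv{m-n}^{-y}$ uniformly in $m$ by H\"older with an intermediate $\ell^a$ exponent $a\in(p,\,1/(1-y))$, which exists precisely because $y>(p-1)/p$. In region~$\mathrm{II}$, I factor $\meanv{m-n}^{-y}\leq c\,\meanv{m}^{-(y-\alpha)}\meanv{m-n}^{-\alpha}$ with $\alpha$ slightly above $1-1/q$ (possible because $y>(q-1)/q$), pull $\meanv{m}^{-(y-\alpha)}$ out, and estimate the residual $\sum_n\meanv{n}^{-x}\meanv{m-n}^{-\alpha}$ by H\"older with the pair $(\ell^q,\ell^{q'})$, using $x>1/q$; letting $\alpha\to(1-1/q)^+$ produces the exponent $y-(1-1/q)=1/q-(1-y)$. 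Summing the two contributions yields \eqref{eq:conv0}.

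For \eqref{eq:conv1} the same split is performed inside $\{|n|\geq M\}$. In region~$\mathrm{II}$ the H\"older argument now picks up the tail factor $\bigl(\sum_{|n|\geq M}\meanv{n}^{-xq}\bigr)^{1/q}\lesssim \meanv{M}^{-(x-1/q)}$ in place of the full $\ell^q$-norm, yielding the second summand of the bound. In region~$\mathrm{I}$ the analysis is unchanged and produces $\meanv{m}^{-(x-1/p)}$; the indicator $1_{\{|m|\geq 2M/3\}}$ arises because region~$\mathrm{I}$, constrained to $|n|\leq 3|m|/2$, intersects $\{|n|\geq M\}$ only when $3|m|/2\geq M$, i.e.\ when $|m|\geq 2M/3$. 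The argument is essentially bookkeeping, and I do not foresee any serious analytic difficulty; the only delicate point is to set up the split precisely at $|n-m|=|n|/3$ rather than at one of the more common places like $|n|=|m|/2$, since this particular choice is what generates the constant $2M/3$ appearing in the indicator.
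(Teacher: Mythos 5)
Your overall strategy (split into two regions, Hölder in each) matches the paper's, and your Region~I argument and your explanation of the indicator $1_{\{|m|\geq 2M/3\}}$ are both correct. However, there is a genuine gap in the Region~II estimate. You factor $\meanv{m-n}^{-y}\leq c\,\meanv{m}^{-(y-\alpha)}\meanv{m-n}^{-\alpha}$ with $\alpha>1-1/q$, pull $\meanv{m}^{-(y-\alpha)}$ out, and then bound the residual $\sum_n \meanv{n}^{-x}\meanv{m-n}^{-\alpha}$ \emph{uniformly in $m$}. For the second Hölder factor $\bigl(\sum \meanv{m-n}^{-\alpha q'}\bigr)^{1/q'}$ to converge you need $\alpha q'>1$, i.e.\ $\alpha$ strictly bigger than $1-1/q$; hence this argument only yields the exponent $y-\alpha$, which is strictly less than the claimed $1/q-(1-y)$. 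The closing remark ``letting $\alpha\to(1-1/q)^+$'' does not repair this: the implied Hölder constant blows up in that limit, so you cannot recover the endpoint exponent this way. As written, the argument loses an $\varepsilon$ in the second term of \eqref{eq:conv0} and \eqref{eq:conv1}.

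The fix is short and is in fact what the paper does. Do not factor out a power of $\meanv{m}$; apply Hölder with the pair $(q,q')$ directly on the sum over Region~II, and keep the Region~II constraint in the \emph{second} factor. Since in Region~II you have $\meanv{m-n}\gtrsim\meanv{m}$, one gets
\[
\sum_{\mathrm{II},\,|n|\geq M}\frac{1}{\meanv{n}^{x}\meanv{m-n}^{y}}
\;\leq\; \Bigl(\sum_{|n|\geq M}\meanv{n}^{-xq}\Bigr)^{1/q}
\Bigl(\sum_{|k|\gtrsim |m|}\meanv{k}^{-yq'}\Bigr)^{1/q'}
\;\lesssim\; \meanv{M}^{-(x-\frac1q)}\,\meanv{m}^{-(y-\frac1{q'})},
\]
and $y-\tfrac{1}{q'}=y-(1-\tfrac1q)=\tfrac1q-(1-y)$ is exactly the claimed exponent (both tails converge by $xq>1$ and $yq'>1$, i.e.\ the hypotheses $x>\tfrac1q$, $y>\tfrac{q-1}{q}$). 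Equivalently, if you prefer to keep the factoring, you must retain the Region~II restriction in the residual sum, in which case the additional decay $\meanv{m}^{-(\alpha q'-1)/q'}$ makes the $\alpha$ cancel; but treating the residual as a constant, as in your write-up, loses the endpoint. Note also that the paper's split is slightly different (it divides according to whether $|n|\geq |m|/2$ or $|m-n|\geq |m|/2$ rather than at $|n-m|=|n|/3$), and it extracts the $\meanv{m}$-decay from the tail of the relevant $\ell^{p}$ or $\ell^{q'}$ factor in both regions rather than by factoring out powers; this is why it avoids the issue above and is a bit more symmetric between the two summands.
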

\begin{remark}\label{rmk:conv0}
If $x,y>1$, 
taking $p=\infty$ and $q=1$ we recover the well-known convolution estimate for powers. 
\end{remark}

\begin{proof}
We can assume $m \neq 0$, otherwise the statement is immediate. 
We have
\bea\label{G-Est}
\sum_{|n| \geq M}\frac{1}{\meanv{n}^x\meanv{m-n}^y}&\leq & 1_{\{|m|\geq \frac{2M}3\}}\sum_{\{|n|\geq \frac{|m|}{2}\}}\frac{1}{\meanv{n}^x\meanv{m-n}^y}\nn\\
&+&\sum_{\{|m-n|\geq \frac{|m|}{2}\}\cap\{|n|\geq M\}}\frac{1}{\meanv{n}^x\meanv{m-n}^y}\,.
\eea
Note that the  second summand is sufficient to bound the l.h.s. if $M>\frac{3|m|}{2}$, otherwise the first term is needed.

We estimate separately the two summands by the H\"older inequality. 
We have
\be
\sum_{\{|n|\geq\frac{|m|}{2}\}}\frac{1}{\meanv{n}^x\meanv{m-n}^y}
\leq \left(\sum_{\{|n|\geq\frac{|m|}{2}\}}\frac{1}{\meanv{n}^{xp}}\right)^{\frac1p}
\left(\sum_{ n\in\Z }\frac{1}{\meanv{n}^{\frac{yp}{p-1}}}\right)^{\frac{p-1}{p}}\leq
\frac{c_1(p,y)}{|m|^{x-\frac1p}}\,\nn
\ee
Similarly 
\bea
\sum_{\{|n|\geq M\}\cap\{|m-n|\geq \frac{|m|}{2}\}}\frac{1}{\meanv{n}^x\meanv{m-n}^y}
&\leq&\left(\sum_{|n|\geq M}\frac{1}{\meanv{n}^{xq}}\right)^{\frac1q}
\left(\sum_{\{|n|\geq\frac{|m|}{2}\}}\frac{1}{\meanv{n}^{\frac{yq}{q-1}}}\right)^{\frac{q-1}{q}}\nn\\
&\leq& \frac{c_1(q,x)}{|m|^{\frac1q-(1-y)}\langle M \rangle^{x-\frac1q}}\,.\nn
\eea
So we obtained (\ref{eq:conv1}), and (\ref{eq:conv0}) also follows taking $M=0$. 
\end{proof}

\begin{remark}\label{rmk:conv1}
The following particular cases of Lemma \ref{lemma:conv1} will be useful in the sequel.
\begin{itemize}
\item[i)] For $\frac12  < s  < \frac32$ we set
\be\label{eq:scelta-qp}
x-\frac1p=\frac1q-(1-y)=s-\frac12 
\ee
and get
\bea
\sum_{n\in\Z}\frac{1}{\meanv{n}^{2s-1}\meanv{m-n}}
&\leq &\frac{c}{\meanv{m}^{s-\frac12}}\label{eq:case-I-conv}\,,\\
\sum_{|n|\geq M}\frac{1}{\meanv{n}^{2s-1} \meanv{m-n}}&\leq& 
\frac{c}{\meanv{m}^{s-\frac12}}\left(1_{\{|m|\gtrsim M\}}+\frac{1}{\meanv{M}^{s-\frac12}}\right)
\,.\label{eq:case-I-convM}
\eea
\item[ii)] For $s \geq \frac32$ we set $p=q= 1$, which gives 
\bea
\sum_{n\in\Z}\frac{1}{\meanv{n}^{2s-1} \meanv{m-n}}&\leq &\frac{c}{\meanv{m}}\label{eq:case-I-conv3/2}\,,\\ 
\label{eq:case-I-conv3/2M}
\sum_{|n|\geq M}\frac{1}{ \meanv{n}^{2s-1} \meanv{m-n}}&\leq& 
 \frac{c}{\meanv{m}}
 \left(1_{\{|m|\gtrsim M\}} + \frac{1}{\meanv{M}}\right)  
\,.
\eea
\item[iii)] For $s > \frac12$ by the same choice (\ref{eq:scelta-qp}) we get
\bea
\sum_{n\in\Z}\frac{1}{\meanv{n}^s\meanv{m-n}^{s}}&\leq &\frac{c}{\meanv{m}^{s-\frac12}}\,,\label{eq:case-II-conv}\\
\sum_{|n|\geq M}\frac{1}{\meanv{n}^s\meanv{m-n}^{s}}&\leq &
\frac{c}{\meanv{m}^{s-\frac12}} 
\left(1_{\{|m|\gtrsim M\}}+\frac{1}{\meanv{M}^{s-\frac12}}\right)\,.\label{eq:case-II-convM}
\eea
\item[iv)] For $s > \frac12$ we set $\frac1p = \varepsilon, q=1$, where $\varepsilon >0$ 
may be chosen arbitrarily small, and get
\bea
\sum_{n\in\Z}\frac{1}{\meanv{n}^{2s+1}\meanv{m-n}}&\leq &\frac{c}{\meanv{m}}\,,\label{eq:case-IV-conv}\\
\sum_{|n|\geq M}\frac{1}{\meanv{n}^{2s+1}\meanv{m-n}}&\leq &
\frac{c}{\meanv{m}} 
\left(1_{\{|m|\gtrsim M\}}+\frac{1}{\meanv{M}^{2s}}\right)\,.\label{eq:case-IV-convM}
\eea
\end{itemize}
\end{remark}

Thanks to Proposition \ref{prop:growth-Hs-norm}, we can split the proof of Proposition \ref{prop:Wick} into three steps, one statement for each $F_{i,N}$, $i=1,2,3$ (recall \eqref{Def:F1N}, \eqref{Def:F2N} and \eqref{Def:F3N}). Again, 
we are using the  shorten notation $F_{i, N} = F_{i, N}(0,u)$.

We feel the need to warn the reader about the next somewhat lengthy computations. In particular the proof of subsequent Lemma \ref{Lemma:DecayF2} is a long enumeration of cases, each of which reduces to a term already estimated in the proof of Lemma \ref{Lemma:DecayF1}. The proofs of Lemma \ref{Lemma:DecayF1} and Lemma \ref{Lemma:Lemma:DecayF2} are similar, but independent one from each other. 
  
In what follows we use crucially the Wick formula for expectation values of multilinear forms of Gaussian random variables in the following form. 
Let $\ell \in \N$ and $S_{\ell}$ be the 
symmetric group on $\{1,\dots,\ell\}$, whose elements are denoted by $\s$. 
Recalling that $\hat u(-n) = \overline{ \hat{u}(n)}$  
\begin{equation}\label{eq:Wick}
E_s\Big[ \prod_{j=1}^{\ell} \hat  u(n_j) \hat  u(-m_j)  \Big]= E_s\Big[ \prod_{j=1}^{\ell} \hat  u(n_j)  \overline{ \hat{u}(m_j)}  \Big] 
= 
\sum_{\sigma \in S_{\ell}}\prod_{j=1}^{\ell} \frac{\d_{m_j,n_{\sigma(j)}}}{|n_j|^{2s+1}} 
 \, ,
\end{equation}
where $E_s$ is the expectation w.r.t. $\gamma_s$. In the following we shall use $\ell=3$ and often refer to the elements of $S_{3}$ as {\em contractions} (of indeces). 

The following set will appear int he next three proofs. Given a vector $a \in \mathbb{Z}^3$ we denote $a_j$ its components and
we define 
\begin{equation}\label{eq:A-NM}
A_{N,M}= 
\{a_1,a_2,a_3 \neq 0,\,
 a_1+a_2+a_3 =  0 \,,\,N\geq \max_j(|a_j|)>M\}\,.
\end{equation}
Note that if $a \in A_{N,M}$ we must have 
\begin{equation}\label{ZeroAverageWick}
a_j \neq -a_{j'} \quad \mbox{for all} \quad j, j' \in \{1,2,3 \}\,.
\end{equation}
This is because of the restrictions $a_j \neq 0$, $a_1+a_2+a_3 =  0$.

\begin{lemma}\label{Lemma:DecayF1}
For all $N>M$ it holds
\bea
\| F_{1,N} - F_{1,M} \|_{L^{2}(\gamma_s)} &\lesssim& \frac{1}{M^{\frac{s}{2}-\frac14}}\,,
\quad \quad s\in\left(\frac12,\frac32\right] \label{DecayF1-s-piccolo}\\
\| F_{1,N} - F_{1,M} \|_{L^{2}(\gamma_s)} &\lesssim& \frac{1}{\sqrt{M}}\,, \quad \quad s>\frac32\label{DecayF1-s-grande}\,.
\eea
\end{lemma}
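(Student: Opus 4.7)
My plan is to expand $F_{1,N}-F_{1,M}$ as a trilinear Gaussian polynomial and reduce its second moment to a deterministic sum via the Wick formula. Inserting the Fourier expansion of $\varphi_s$ into \eqref{Def:F1N} at $t=0$ and symmetrising through $\hat u(-n)=\overline{\hat u(n)}$, a direct computation yields, with $A_{N,M}$ as in \eqref{eq:A-NM},
\[
F_{1,N}-F_{1,M}\;=\;4\mi\sum_{A_{N,M}}\frac{\mathrm{sgn}(n_3)}{|n_1|^{1/2}|n_2|^{1/2}|n_3|^{s-1/2}}\,g_{n_1}g_{n_2}g_{n_3}.
\]
Applying \eqref{eq:Wick} to compute $\|F_{1,N}-F_{1,M}\|_{L^2(\g_s)}^2$ generates six terms, one per contraction $\tau\in S_3$; bounding the sign factors by $1$ and using the invariance of $B_{N,M}$ under permutations of $(n_1,n_2,n_3)$ to identify equivalent contractions, the six contributions collapse into (constant multiples of) the two positive sums
\[
S_A\;:=\sum_{B_{N,M}}\!\frac{1}{\langle n_1\rangle\langle n_2\rangle\langle n_3\rangle^{2s-1}},\qquad S_B\;:=\sum_{B_{N,M}}\!\frac{1}{\langle n_1\rangle^s\langle n_2\rangle^s\langle n_3\rangle}.
\]
It then suffices to prove $S_A+S_B\lesssim M^{-(s-1/2)}$ for $s\in(\frac12,\frac32]$ and $\lesssim M^{-1}$ for $s>\frac32$.

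For $S_B$ I would fix $n_3$ and contract in $n_1$ (with $n_2=-n_1-n_3$) via Remark~\ref{rmk:conv1}(iii), yielding a factor $\langle n_3\rangle^{-(s-1/2)}$ and hence a summand $\lesssim \langle n_3\rangle^{-(s+1/2)}$. Splitting $B_{N,M}$ into $\{|n_3|>M\}$ (handled by direct tail estimation) and $\{|n_3|\leq M,\,\max(|n_1|,|n_2|)>M\}$ (handled with the restricted convolution \eqref{eq:case-II-convM}) gives $S_B\lesssim M^{-(s-1/2)}$ throughout $s>\frac12$; this already covers the $s>\frac32$ case since then $s-\frac12>1$.

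The delicate term is $S_A$: contracting $n_1$ at $n_3$ fixed yields only the logarithmically quasi-divergent bound $\sum_{n_1}\langle n_1\rangle^{-1}\langle n_1+n_3\rangle^{-1}\lesssim \langle n_3\rangle^{-(1-1/p)}$ (from Lemma~\ref{lemma:conv1}), with an $\epsilon$-loss that must be cancelled elsewhere. In the subregion $\{|n_3|>M\}$ the residual factor $\langle n_3\rangle^{1-2s}$ absorbs this loss and gives $\lesssim M^{-(2s-1-1/p)}\lesssim M^{-(s-1/2)}$ for $s>\frac12$ (and $\lesssim M^{-1}$ for $s>\frac32$) once $p$ is chosen suitably large. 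For the symmetric subregion $\{|n_3|\leq M,\,|n_1|>M\}$ I would use \eqref{eq:conv1} with $x=y=1$ and tuned $p,q>1$, then integrate against $\langle n_3\rangle^{-(2s-1)}$: the exponent $q$ is chosen so that $2s-1+1/q\geq 1$ to keep the $n_3$-sum converging, while $p$ is maximised to squeeze the boundary contribution; the admissible range of $q$ is dictated by whether $s$ lies below or above $1$, which is precisely where the dichotomy in the statement appears. For $s>\frac32$ the sharper, log-free convolution \eqref{eq:case-I-conv3/2M} makes this balancing trivial and directly yields $M^{-1}$. This careful tuning of the H\"older exponents in the convolution lemma is the main technical obstacle; the remainder of the argument is case-enumeration bookkeeping.
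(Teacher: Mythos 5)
Your reduction (insert the Fourier expansion, apply the Wick formula \eqref{eq:Wick}, collapse the six contractions into the two representative sums $S_A$ and $S_B$) is exactly the paper's first step, and your treatment of $S_B$ coincides with the paper's treatment of the $\sigma=(1,3,2)$ contraction. The divergence from the paper is entirely in the treatment of $S_A$, and there it introduces a genuine gap.

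For $S_A=\sum\langle n_1\rangle^{-1}\langle n_2\rangle^{-1}\langle n_3\rangle^{-(2s-1)}$ you eliminate the middle variable and perform the inner sum over the pair with exponents $(1,1)$, i.e.\ you estimate $\sum_{n_1}\langle n_1\rangle^{-1}\langle n_1+n_3\rangle^{-1}$. With $x=y=1$ in Lemma~\ref{lemma:conv1} you are forced to take $p,q>1$ \emph{strictly} (the hypothesis is $\max(1/p,1/q)<x=1$), so the truncated estimate \eqref{eq:conv1} can only yield $\langle M\rangle^{-(1-1/q)}$ with $1/q>0$. In the subregion $\{|n_3|\leq M,\,|n_1|>M\}$, after multiplying by $\langle n_3\rangle^{-(2s-1)}$ and summing in $n_3$, the best achievable rate is therefore $M^{-(1-1/q)}$ with $1/q>\max(0,2-2s)$, which matches the target $M^{-(s-\frac12)}$ only for $s<\frac32$ and fails by an $\varepsilon$ (equivalently a logarithm) at the endpoint $s=\frac32$, which is included in \eqref{DecayF1-s-piccolo}. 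The paper avoids this entirely by ordering the convolution differently: after eliminating $n_1$ it performs the inner sum $\sum_{n_3}\langle n_2-n_3\rangle^{-1}\langle n_3\rangle^{-(2s-1)}$, i.e.\ it pairs the weight $1$ against the weight $2s-1>1$, where $x=2s-1\geq2$ at $s=\frac32$, so $p=q=1$ is admissible in Lemma~\ref{lemma:conv1} and the bound $\langle n_2\rangle^{-(s-\frac12)}$ is reached without $\varepsilon$-loss; no delicate balancing of H\"older exponents is required at any $s>\frac12$. Your closing remark that the constraint on $q$ changes at $s=1$ ``precisely where the dichotomy in the statement appears'' is also factually wrong: the dichotomy in the lemma is at $s=\frac32$, not at $s=1$; the threshold you observe at $s=1$ is an artefact of your sub-optimal convolution order. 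To repair the argument, either switch to the paper's pairing for all $s$, or at least invoke \eqref{eq:case-I-conv3/2}--\eqref{eq:case-I-conv3/2M} at $s=\frac32$ as well as for $s>\frac32$, and drop the claim about $s=1$.
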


\begin{proof}
We have (recall \eqref{Def:F1N})
$$
F_{1,N} - F_{1,M} = \frac{2 i}{\pi} \sum_{n \in A_{N.M}} |n_1|^{s} |n_2|^s n_3 \, \hat u(n_1) \hat  u(n_2) \hat  u(n_3)  \,.
$$
and taking the modulus squared (recall $\hat u(-m_j) = \overline{ \hat{u}(m_j)}$)
\begin{equation}\label{SquareBeforeWick}
|F_{1,N} - F_{1,M}|^2 =  \frac{4}{\pi^2} \sum_{(n,m) \in A_{N.M}^{2} } |n_1|^{s} |n_2|^s n_3 |m_1|^{s} |m_2|^s m_3 \prod_{j=1}^{3} \hat  u(n_j) \hat  u(-m_j) \, .
\end{equation}
When taking the expected value of \eqref{SquareBeforeWick} 
w.r.t. $\gamma_s$ we use the
Wick formula \eqref{eq:Wick} with~$\ell=3$. This gives 
\begin{equation}
\| F_{1,N} - F_{1,M} \|_{L^{2}(\gamma_s)}^2 = \frac{4}{\pi^2}
\sum_{\sigma \in S_3} \sum_{n \in A_{N,M}} \frac{ |n_1|^{s}|n_{\sigma(1)}|^s |n_2|^s |n_{\sigma(2)}|^s n_3 n_{\sigma(3)}  }{ |n_1|^{2s +1} |n_2|^{2s +1} |n_3|^{2s +1} }
\end{equation}
It is easy to see that the contractions $\sigma=(1,2,3)$ and $\sigma=(2,1,3)$ give the same contributions. Also, the remaining contractions give all the same contributions.
Thus we may reduce to the cases $\sigma=(1,2,3)$ and (say) $\sigma=(1,3,2)$. 

The contribution relative to $\sigma=(1,2,3)$ is
\be\label{1Bound}
\sum_{\substack{ |n_j| \leq N, n_j \neq 0 \\ n_1 + n_2 + n_3 = 0 \\ \max_j(|n_j|) >M} } \frac{ |n_1|^{2s} |n_2|^{2s}  n_3^2   }{ |n_1|^{2s +1} |n_2|^{2s +1} |n_3|^{2s +1} }
\leq
\sum_{\substack{ |n_j| \leq N, n_j \neq 0 \\ n_1 + n_2 + n_3 = 0 \\ \max_j(|n_j|) >M} } \frac{ |n_1|^{2s} |n_2|^{2s} | n_3|^2   }{ |n_1|^{2s +1} |n_2|^{2s +1} |n_3|^{2s +1} }\,.
\ee
We write

\bea
\mbox{r.h.s. of }\eqref{1Bound}&\lesssim&\sum_{\substack{ |n_j| \leq N \\ n_1 + n_2 + n_3 = 0 \\ \max_j(|n_j|) >M} } \frac{1}{ \meanv{n_1} \meanv{n_2} \meanv{n_3}^{2s -1} }\nn\\
&\leq&\sum_{\substack{ |n_2|, |n_3| \leq N \\ \max{(|n_2|,|n_3|)} \gtrsim M} } \frac{1}{ \meanv{n_2} \meanv{n_2-n_3} \meanv{n_3}^{2s -1} }\nn\\
&\lesssim&\sum_{|n_2|>M}\frac{1}{\meanv{n_2}}\sum_{n_3\in\Z } \frac{1}{\meanv{n_2-n_3} \meanv{n_3}^{2s -1} }\label{eq:1Bound-A}\\
&+&\sum_{n_2\in\Z}\frac{1}{\meanv{n_2}}\sum_{|n_3|>M } \frac{1}{\meanv{n_2-n_3} \meanv{n_3}^{2s -1} }\label{eq:1Bound-B}\,.
\eea
In the second inequality we used the symmetry of the r.h.s. under $m_3 \leftrightarrow -m_3$ and that
$n_1 + n_2 + n_3 = 0$ and $\max_j(|n_j|) >M$ imply $\max(|n_2|,|n_3|)\gtrsim M$.

For $s\in(\frac12,\frac32]$ the inner sums in (\ref{eq:1Bound-A}) and (\ref{eq:1Bound-B}) can be estimated respectively by (\ref{eq:case-I-conv}) and~(\ref{eq:case-I-convM}) 
\bea
\eqref{eq:1Bound-A}&\leq&
\sum_{|n_2|>M} \frac{1}{\meanv{n_2}} \frac{1}{\meanv{n_2}^{s-\frac12}} = 
\sum_{|n_2|>M}\frac{1}{\meanv{n_2}^{s+\frac12}}
\lesssim \frac{1}{M^{s-\frac12}}\,,\label{eq:1Bound-A-fine}\\
\eqref{eq:1Bound-B} &\leq& \sum_{|n_2|\gtrsim M}\frac{1}{\meanv{n_2}^{s+\frac12}}
+\frac{1}{M^{s-\frac12}}\sum_{n_2\in\Z}\frac{1}{\meanv{n_2}^{s+\frac12}}
\lesssim \frac{1}{M^{s-\frac12}}\,.\label{eq:1Bound-B-fine}
\eea
For $s>\frac32$ we estimate the inner sums of (\ref{eq:1Bound-A}) and (\ref{eq:1Bound-B}) 
using the inequalities 
(\ref{eq:case-I-conv3/2}) and (\ref{eq:case-I-conv3/2M}) 
and obtain
\bea
\eqref{eq:1Bound-A}&\leq&\sum_{|n_2|>M}\frac{1}{\meanv{n_2}^{2}}\lesssim \frac{1}{M}\,,\label{eq:1Bound-A-fine-sgrande}\\
\eqref{eq:1Bound-B}&\leq&\sum_{|n_2|\gtrsim M}\frac{1}{\meanv{n_2}^{2}}+\frac{1}{M}\sum_{n_2\in\Z}\frac{1}{\meanv{n_2}^{2}}\lesssim \frac{1}{M}\,.\label{eq:1Bound-B-fine-sgrande}
\eea

The contribution relative to $\sigma=(1,3,2)$ is 
 \begin{equation}\label{2Bound}
 \sum_{\substack{ |n_j| \leq N, n_j \neq 0 \\ \max_j(|n_j|) >M} } \frac{ |n_1|^{2s} |n_2|^{s} n_2  |n_3|^{s}n_3   }{ |n_1|^{2s +1} |n_2|^{2s +1} |n_3|^{2s +1} }
\leq
\sum_{\substack{ |n_j| \leq N, n_j \neq 0 \\ \max_j(|n_j|) >M} } \frac{ |n_1|^{2s} |n_2|^{s+1}  |n_3|^{s+1}   }{ |n_1|^{2s +1} |n_2|^{2s +1} |n_3|^{2s +1} }
\end{equation}
Again we can write
\bea
\mbox{r.h.s. of }\eqref{2Bound}&\lesssim&\sum_{\substack{ |n_j| \leq N \\ n_1 + n_2 + n_3 = 0 \\ \max_j(|n_j|) >M} } \frac{1}{ \meanv{n_1} \meanv{n_2}^s \meanv{n_3}^{s} }\nn\\
&=&\sum_{\substack{ |n_2|, |n_3| \leq N \\ \max{(|n_1|,|n_2|)} >M} } \frac{1}{ \meanv{n_1}\meanv{n_2}^{s} \meanv{n_1-n_2}^s  }\nn\\
&\leq&\sum_{|n_1| \gtrsim M}\frac{1}{\meanv{n_1}}\sum_{n_2\in\Z } \frac1{\meanv{n_2}^{s} \meanv{n_1-n_2}^s}\label{eq:2Bound-A}\\
&+&\sum_{n_1\in\Z}\frac{1}{\meanv{n_1}}\sum_{|n_2| \gtrsim M } \frac1{\meanv{n_2}^{s} \meanv{n_1-n_2}^s}\label{eq:2Bound-B}\,.
\eea
The inner sums in (\ref{eq:2Bound-A}) and (\ref{eq:2Bound-B}) are estimated respectively by (\ref{eq:case-II-conv}) and (\ref{eq:case-II-convM}) and we obtain
\bea
\eqref{eq:2Bound-A}&\leq&\sum_{|n_1|>M}\frac{1}{\meanv{n_1}^{s+\frac12}} 
\lesssim \frac{1}{M^{s-\frac12}}\,,\label{eq:2Bound-A-fine}\\
\eqref{eq:2Bound-B}&\leq&\sum_{|n_1|\gtrsim M}\frac{1}{\meanv{n_1}^{s+\frac12}}
+\frac{1}{M^{s-\frac12}}\sum_{n_1\in\Z}\frac{1}{\meanv{n_1}^{s+ \frac12}}\lesssim \frac{1}{M^{s-\frac12}}\,.\label{eq:2Bound-B-fine}
\eea
\end{proof}

\begin{lemma}\label{Lemma:DecayF2}
For all $N>M$ it holds
\bea
\| F_{2,N} - F_{2,M} \|_{L^{2}(\gamma_s)} &\lesssim& \frac{1}{M^{\frac{s}{2}-\frac14}}\,,
\quad \quad s\in\left(\frac12,\frac32\right] \label{DecayF1-s-piccolo}\\
\| F_{2,N} - F_{2,M} \|_{L^{2}(\gamma_s)} &\lesssim& \frac{1}{\sqrt{M}}\,, \quad \quad s>\frac32\label{DecayF1-s-grande}\,.
\eea
\end{lemma}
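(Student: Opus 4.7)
The plan is to follow the same scheme as Lemma \ref{Lemma:DecayF1}: express $F_{2,N} - F_{2,M}$ in Fourier as a trilinear form in $\hat u$, apply the Wick formula \eqref{eq:Wick} to its squared norm, and reduce the resulting sums to the two model sums already bounded there. Using the identity $[|D_x|^s, f]g = |D_x|^s(fg) - f\, |D_x|^s g$, the Fourier coefficient of $[|D_x|^s, P_N u]\,\partial_x P_N u$ at mode $k$ is
$\sum_{m+\ell=k,\,|m|,|\ell|\leq N} i\ell\,\bigl(|k|^s - |\ell|^s\bigr)\,\hat u(m)\hat u(\ell)$.
Pairing with $|D_x|^s P_N u$ in $L^2$ and relabeling so that $n_1 + n_2 + n_3 = 0$, with $n_1$ carrying the outer $|D_x|^s$ and $n_3$ the inner derivative, yields
\begin{equation*}
F_{2,N} - F_{2,M} = C\,i \sum_{n \in A_{N,M}} n_3 \,|n_1|^s\bigl(|n_1|^s - |n_3|^s\bigr) \prod_{j=1}^3 \hat u(n_j) .
\end{equation*}
Squaring and applying Wick's formula reduces the task to estimating, for each $\sigma \in S_3$, sums of the form
\begin{equation*}
\sum_{n \in A_{N,M}} \frac{n_3\, n_{\sigma(3)}\, |n_1|^s |n_{\sigma(1)}|^s\bigl(|n_1|^s-|n_3|^s\bigr)\bigl(|n_{\sigma(1)}|^s-|n_{\sigma(3)}|^s\bigr)}{\prod_j |n_j|^{2s+1}} ,
\end{equation*}
several of which coincide after the symmetry reductions already employed in Lemma \ref{Lemma:DecayF1}.

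The key step is to exploit the commutator cancellation carried by $|n_1|^s - |n_3|^s$ via a dichotomy on the size of $|n_2|$ relative to $\min(|n_1|,|n_3|)$. In the low-$|n_2|$ regime $|n_2|\lesssim \min(|n_1|,|n_3|)$ the sum constraint $n_1=-n_2-n_3$ forces $|n_1|\sim |n_3|$, and the mean value theorem yields $\bigl||n_1|^s-|n_3|^s\bigr|\lesssim \max(|n_1|,|n_3|)^{s-1}\,|n_2|$, so the effective coefficient $n_3\, |n_1|^s(|n_1|^s-|n_3|^s)$ is of size $\lesssim |n_1|^s|n_3|^{s}\,|n_2|$. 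In the high-$|n_2|$ regime $|n_2|\gg \min(|n_1|,|n_3|)$ one has $\max(|n_1|,|n_3|)\lesssim |n_2|$, so the crude bound $\bigl||n_1|^s-|n_3|^s\bigr|\lesssim \max(|n_1|,|n_3|)^s \lesssim |n_2|^s$ suffices and the effective coefficient becomes $\lesssim n_3\,|n_1|^s|n_2|^s$. In both regimes the effective monomial in $|n_1|,|n_2|,|n_3|$ coincides, after a swap of $n_2$ and $n_3$, with the $F_{1,N}$ coefficient $n_3\, |n_1|^s|n_2|^s$. Thus each contraction $\sigma$ splits into at most two subcases, and each subcase reduces to one of the two model sums \eqref{1Bound} or \eqref{2Bound} bounded in Lemma \ref{Lemma:DecayF1}. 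The convolution estimates of Lemma \ref{lemma:conv1} in the variants of Remark \ref{rmk:conv1} then deliver the decay $M^{-(s-1/2)}$ for the squared $L^2$-norm when $s\in(1/2,3/2]$, i.e.\ $M^{-(s/2-1/4)}$ for the $L^2$-norm, and $M^{-1}$, i.e.\ $M^{-1/2}$, when $s>3/2$.

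The main obstacle is organizational rather than analytic: six contractions combined with the two regimes produce on the order of a dozen nominally distinct sums, and one must verify that each of them falls into one of the two already treated model shapes. A subtle point is that a term-by-term expansion of $(|n_1|^s-|n_3|^s)^2 = |n_1|^{2s} - 2|n_1|^s|n_3|^s + |n_3|^{2s}$ yields individually divergent series in the subregion where one of $|n_1|,|n_3|$ dominates the other, so it is essential to keep the commutator cancellation bundled through the mean value bound above rather than separating the three monomials by triangle inequality from the outset. Once this is respected, no convolution estimate beyond those already established in Lemma \ref{lemma:conv1} is needed.
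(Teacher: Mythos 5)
Your proposal follows essentially the same route as the paper. The Fourier representation you derive, $C\,i\sum_{n\in A_{N,M}} n_3\,|n_1|^s(|n_1|^s-|n_3|^s)\prod_j\hat u(n_j)$, coincides (after the harmless relabeling $n_2\leftrightarrow n_3$ and using $n_1+n_2+n_3=0$, so $|n_2+n_3|=|n_1|$) with the paper's $\frac{4i}{\pi}\sum |n_1|^s\,n_2\,(|n_2+n_3|^s-|n_2|^s)\prod_j\hat u(n_j)$. Your mean-value-theorem bound $\bigl||n_1|^s-|n_3|^s\bigr|\lesssim \max(|n_1|,|n_3|)^{s-1}|n_2|$ in the balanced regime is the same estimate the paper obtains from the Taylor expansion in \eqref{HardsMainBound}, and your crude bound in the unbalanced regime is equivalent to \eqref{EasysMainBound}. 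The reduction of every resulting monomial to the two model sums \eqref{1Bound}, \eqref{2Bound} from Lemma \ref{Lemma:DecayF1} is precisely what the paper carries out. Your observation that one must not separate $(|n_1|^{2s}-2|n_1|^s|n_3|^s+|n_3|^{2s})$ by the triangle inequality is also the correct and central point.

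One small imprecision in the bookkeeping: after Wick's formula each term carries \emph{two} commutator factors, $(|n_1|^s-|n_3|^s)$ and $(|n_{\sigma(1)}|^s-|n_{\sigma(3)}|^s)$, and in general each needs its own dichotomy, so a typical permutation generates up to four subcases (as the paper's cases (A)--(D) for $\sigma=(2,1,3)$ show), not two. This is repaired trivially — either run both dichotomies, or bound $|c_n|$ once and for all by the sum of the two monomials $|n_1|^s|n_2|^s|n_3|+|n_1|^s|n_3|^s|n_2|$ and expand the product after Wick — and all resulting monomials are of the $F_1$-type up to permutation. The argument is sound and is the paper's argument.
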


\begin{proof}
In fact, we will reduce to a sum of contributions which are the same as the ones handled in the previous 
lemma.

Recalling that 
$$
\big[ |D_x|^{s}, P_N u(t)  \big] \partial_x P_Nu(t) = |D_x|^{s} ( P_N u(t) \partial_x P_N u(t))  -  P_N u(t) |D_x^s|  \partial_x P_N u(t)    
$$
and proceeding as in the proof of Lemma \ref{Lemma:DecayF1} we obtain
$$
F_{2,N} - F_{2,M} = \frac{4i}{\pi} \sum_{n \in A_{N.M}} |n_1|^s n_2 (|n_2 + n_3|^s - |n_2|^s) \, \hat u(n_1) \hat  u(n_2) \hat  u(n_3)   \,.
$$
Taking the modulus squared
\begin{equation}\label{SquareBeforeWick2}
|F_{2,N} - F_{2,M}|^2 =  \frac{16}{\pi^2} \sum_{(n,m) \in A_{N.M}^{2} } 
|n_1|^s n_2 (|n_2 + n_3|^s - |n_2|^s)
|m_1|^s m_2 (|m_2 + m_3|^s - |m_2|^s)  \prod_{j=1}^{3} \hat  u(n_j) \hat  u(-m_j) 
\end{equation}
and using 
the
Wick formula \eqref{eq:Wick} with~$\ell=3$ we arrive to 
\begin{equation}\label{WRTDTHis}
\| F_{2,N} - F_{2,M} \|_{L^{2}(\gamma_s)}^2 = \frac{16}{\pi^2}
\sum_{\sigma \in S_3} \sum_{n \in A_{N,M}} 
\frac{ |n_1|^s |n_{\sigma(1)}|^s n_2 n_{\sigma(2)} 
(|n_2 + n_3|^s - |n_2|^s)  (|n_{\sigma(2)} + n_{\sigma(3)}|^s - |n_{\sigma(2)}|^s)     }{ |n_1|^{2s +1} |n_2|^{2s +1} |n_3|^{2s +1} }\,.
\end{equation}

Before evaluating all the contributions relative to the different $\sigma$, we recall an useful inequality to handle the difference 
$$
| |a + b|^s - |a|^s |\,.
$$
We distinguish two cases, namely
$$
|a| \leq 2 |b| , \quad  |a| > 2 |b| \,.
$$
In the first case we have immediately
\begin{equation}\label{EasysMainBound}
| |a + b|^s - |a|^s | \lesssim |b|^s, \qquad \mbox{for $|a| \leq 2 |b|$} \, .
\end{equation}
In the second case 
we  
use, for $s>0$, the Taylor expansion (converging for~$|x|<1$)
\be\nonumber
(1+x)^s=1 + \sum_{k\geq1}\frac{(s)_k}{k!}x^k\,,
\ee
where $(s)_k$ is defined by
$$
(s)_0=1\,,\quad (s)_{k}:=\prod_{j=0}^{k-1}(s-j)\,,\quad k\geq1\,. 
$$ 
Letting $x :=  \frac{|b|}{|a|} < \frac{1}{2} $ and using $|(s)_k| \leq k!$ we can bound
\begin{equation}\label{HardsMainBound}
| |a + b|^s - |a|^s | = |a|^s \left|  \sum_{k\geq 1} \left(\frac{|b|}{|a|}\right)^k  \right|  \lesssim |a|^{s-1} |b|, \qquad \mbox{for $|a| > 2 |b|$}  \,.
\end{equation}

Now we are ready to estimate \eqref{WRTDTHis}.

$\bullet$ Permutation $\sigma =(1,2,3)$. We need to handle
\begin{equation}\nonumber
\sum_{\substack{ |n_j| \leq N, n_j \neq 0 \\ n_1 + n_2 + n_3 = 0 \\ \max_j(|n_j|) >M} } 
\frac{  |n_1|^{2s} n_2^2  
(|n_2 + n_3|^s - |n_2|^s)^2       }{ |n_1|^{2s +1} |n_2|^{2s +1} |n_3|^{2s +1} }\,. 
\end{equation}
If $|n_2| \leq 2|n_3|$ we can use \eqref{EasysMainBound} to bound $\big| |n_2 + n_3|^s - |n_2|^s \big| \lesssim |n_3|^s $ so that
\begin{equation}\nonumber
\sum_{\substack{ |n_j| \leq N, n_j \neq 0 \\ n_1 + n_2 + n_3 = 0 \\ \max_j(|n_j|) >M \\ |n_2| \leq 2 |n_3|} } 
\frac{  |n_1|^{2s} n_2^2  
|n_3|^{2s}       }{ |n_1|^{2s +1} |n_2|^{2s +1} |n_3|^{2s +1} }
\leq 
\sum_{\substack{ |n_j| \leq N, n_j \neq 0 \\ n_1 + n_2 + n_3 = 0 \\ \max_j(|n_j|) >M }} 
\frac{  |n_1|^{2s} |n_2|^2  
|n_3|^{2s}       }{ |n_1|^{2s +1} |n_2|^{2s +1} |n_3|^{2s +1} }.
\end{equation}
This is done as \eqref{1Bound} (exchanging $n_2 \leftrightarrow n_3$).

If $|n_2| >  2|n_3|$ we  use \eqref{HardsMainBound} to bound $\big| |n_2 + n_3|^s - |n_2|^s \big| \lesssim |n_2|^{s-1}|n_3| $ and we reduce to estimate 
\begin{equation}\nonumber
\sum_{\substack{ |n_j| \leq N, n_j \neq 0 \\ n_1 + n_2 + n_3 = 0 \\ \max_j(|n_j|) >M \\ |n_2| > 2 |n_3|} } 
\frac{  |n_1|^{2s} |n_2|^2  
|n_2|^{2s-2}|n_3|^2    }{ |n_1|^{2s +1} |n_2|^{2s +1} |n_3|^{2s +1} }
\leq \sum_{\substack{ |n_j| \leq N, n_j \neq 0 \\ n_1 + n_2 + n_3 = 0 \\ \max_j(|n_j|) >M } } 
\frac{  |n_1|^{2s} |n_2|^{2s}  
|n_3|^{2}       }{ |n_1|^{2s +1} |n_2|^{2s +1} |n_3|^{2s +1} }.
\end{equation}
We can again proceed as we have done for \eqref{1Bound}, getting the same decay rate.

$\bullet$ Permutation $\sigma =(1,3,2)$. We need to handle
\begin{equation}\label{2BoundBis}
\sum_{\substack{ |n_j| \leq N, n_j \neq 0 \\ n_1 + n_2 + n_3 = 0 \\ \max_j(|n_j|) >M} } 
\frac{  |n_1|^{2s}  n_2 n_3 ( |n_2 + n_3|^s - |n_2|^s) ( |n_2 + n_3|^s - |n_3|^s )
 }{ |n_1|^{2s +1} |n_2|^{2s +1} |n_3|^{2s +1} }\,.
\end{equation}
We have three possibilities: 
\begin{enumerate}[(A)]
\item  $|n_2| \leq 2 |n_3|$ and $|n_3| \leq 2 |n_2|$, 
\item   $|n_3| > 2 |n_2|$, 
\item   $|n_2| > 2|n_3|$.
\end{enumerate}
In the case (A) we use \eqref{EasysMainBound} to bound 
$\big| |n_2 + n_3|^s - |n_2|^s\big| \lesssim |n_3|^s$ and $| |n_2 + n_3|^s - |n_3|^s \big| \lesssim |n_2|^s$. Thus
 we reduce to estimate
\begin{equation}\nonumber
\sum_{\substack{ |n_j| \leq N, n_j \neq 0 \\ n_1 + n_2 + n_3 = 0 \\ \max_j(|n_j|) >M \\ |n_2| \leq 2 |n_3| \, \mbox{and} \, |n_3| \leq 2 |n_2|} } 
\frac{  |n_1|^{2s}  n_2 n_3 |n_3|^s |n_2|^s
 }{ |n_1|^{2s +1} |n_2|^{2s +1} |n_3|^{2s +1} }
\leq
\sum_{\substack{ |n_j| \leq N, n_j \neq 0 \\ n_1 + n_2 + n_3 = 0 \\ \max_j(|n_j|) >M }} 
\frac{  |n_1|^{2s}  |n_2|^{s+1} |n_3|^{s+1} 
 }{ |n_1|^{2s +1} |n_2|^{2s +1} |n_3|^{2s +1} } \, .
\end{equation}
This is done as \eqref{2Bound}. 

If we are in the case (B) we have in particular $|n_2| < \frac{1}{2} |n_3| \leq 2 |n_3|$, so we can use \eqref{EasysMainBound} 
to bound the difference $||n_2 + n_3|^s - |n_2|^s| \lesssim |n_3|^s$ 
and \eqref{HardsMainBound} 
to bound the difference $||n_2 + n_3|^s - |n_3|^s| \lesssim |n_3|^{s-1} |n_2|$. Thus we need to estimate
\begin{equation}\nonumber
\sum_{\substack{ |n_j| \leq N, n_j \neq 0 \\ n_1 + n_2 + n_3 = 0 \\ \max_j(|n_j|) >M \\ |n_3| > 2 |n_2|}  } 
\frac{  |n_1|^{2s}  n_2 n_3 |n_3|^s |n_3|^{s-1} |n_2| 
 }{ |n_1|^{2s +1} |n_2|^{2s +1} |n_3|^{2s +1} }
\leq
\sum_{\substack{ |n_j| \leq N, n_j \neq 0 \\ n_1 + n_2 + n_3 = 0 \\ \max_j(|n_j|) >M }  } 
\frac{  |n_1|^{2s}  |n_2|^{2}   |n_3|^{2s} 
 }{ |n_1|^{2s +1} |n_2|^{2s +1} |n_3|^{2s +1} }
\end{equation}
and this is done as \eqref{1Bound}. The case (C) is the same as (B) exchanging $n_2 \leftrightarrow n_3$.

$\bullet$ Permutation $\sigma =(2,1,3)$. We need to handle
\begin{equation}\nonumber
\sum_{\substack{ |n_j| \leq N, n_j \neq 0 \\ n_1 + n_2 + n_3 = 0 \\ \max_j(|n_j|) >M} } 
\frac{ |n_1|^s |n_{2}|^s n_2 n_{1} 
(|n_2 + n_3|^s - |n_2|^s)  (|n_{1} + n_{3}|^s - |n_{1}|^s)     }{ |n_1|^{2s +1} |n_2|^{2s +1} |n_3|^{2s +1} }\,.
\end{equation}
We distinguish
\begin{enumerate}[(A)]
\item $|n_2| \leq 2 |n_3|$, $|n_1| \leq 2 |n_3|$
\item $|n_2| \leq 2 |n_3|$, $|n_1| > 2 |n_3|$
\item $|n_2| > 2 |n_3|$, $|n_1| \leq 2 |n_3|$;  \qquad (same as (B) switching $n_2 \leftrightarrow n_1$)
\item $|n_2| > 2 |n_3|$, $|n_1| > 2 |n_3|$\,.
\end{enumerate}
In the case (A) we use \eqref{EasysMainBound} to bound $ \big| |n_2 + n_3|^s - |n_2|^s\big| \lesssim |n_3|^s$ and
 $ \big| |n_{1} + n_{3}|^s - |n_{1}|^s \big| \lesssim |n_3|^s$. Thus we need to estimate  
\begin{equation}\nonumber
\sum_{\substack{ |n_j| \leq N, n_j \neq 0 \\ n_1 + n_2 + n_3 = 0 \\ \max_j(|n_j|) >M \\ |n_2| \leq 2 |n_3| \, \mbox{and} \, |n_1| \leq 2 |n_3|} } 
\frac{ |n_1|^s |n_{2}|^s n_2 n_{1} 
|n_3|^s |n_3|^s     }{ |n_1|^{2s +1} |n_2|^{2s +1} |n_3|^{2s +1} }
\leq
\sum_{\substack{ |n_j| \leq N, n_j \neq 0 \\ n_1 + n_2 + n_3 = 0 \\ \max_j(|n_j|) >M } } 
\frac{ |n_1|^{s+1}  |n_{2}|^{s+1}   
  |n_{3}|^{2s}     }{ |n_1|^{2s +1} |n_2|^{2s +1} |n_3|^{2s +1} }\,,
\end{equation}
that is done as \eqref{2Bound}.

In the case (B) we use  \eqref{EasysMainBound}  to bound 
$\big||n_2 + n_3|^s - |n_2|^s\big| \lesssim |n_3|^s$ and \eqref{HardsMainBound} 
to bound $\big| |n_{1} + n_{3}|^s - |n_{1}|^s\big| \lesssim |n_1|^{s-1} |n_3|$, so that
 \begin{equation}\nonumber
\sum_{\substack{ |n_j| \leq N, n_j \neq 0 \\ n_1 + n_2 + n_3 = 0 \\ \max_j(|n_j|) >M \\ |n_2| \leq 2 |n_3| \, \mbox{and} \, |n_1| > 2 |n_3|} } 
\frac{ |n_1|^s |n_{2}|^s n_2 n_{1} 
|n_3|^s  |n_1|^{s-1} |n_3|    }{ |n_1|^{2s +1} |n_2|^{2s +1} |n_3|^{2s +1} }
\leq
\sum_{\substack{ |n_j| \leq N, n_j \neq 0 \\ n_1 + n_2 + n_3 = 0 \\ \max_j(|n_j|) >M } } 
\frac{ |n_1|^{2s} |n_{2}|^{s+1}   
|n_3|^{s+1}         }{ |n_1|^{2s +1} |n_2|^{2s +1} |n_3|^{2s +1} }\,,
\end{equation}
that is estimated as \eqref{2Bound}. The case (C) is the same as (B) using the symmetry $n_2 \leftrightarrow n_1$. 

In the case (D) we 
 use
 \eqref{HardsMainBound}  to bound 
$\big||n_2 + n_3|^s - |n_2|^s\big| \lesssim |n_2|^{s-1} |n_3|$ and  
$\big| |n_{1} + n_{3}|^s - |n_{1}|^s\big| \lesssim |n_1|^{s-1} |n_3|$, so that
\begin{equation}\nonumber
\sum_{\substack{ |n_j| \leq N, n_j \neq 0 \\ n_1 + n_2 + n_3 = 0 \\ \max_j(|n_j|) >M \\ |n_2| > 2 |n_3| \, \mbox{and} \, |n_1| > 2 |n_3|} } 
\frac{ |n_1|^s |n_{2}|^s n_2 n_{1} 
|n_2|^{s-1} |n_3|    |n_1|^{s-1} |n_3|     }{ |n_1|^{2s +1} |n_2|^{2s +1} |n_3|^{2s +1} }
\leq
\sum_{\substack{ |n_j| \leq N, n_j \neq 0 \\ n_1 + n_2 + n_3 = 0 \\ \max_j(|n_j|) >M } } 
\frac{ |n_1|^{2s} |n_{2}|^{2s}|n_{3}|^{2}  }{ |n_1|^{2s +1} |n_2|^{2s +1} |n_3|^{2s +1} }\,,
\end{equation}
that is estimated as \eqref{1Bound}.  

$\bullet$ Permutation $\sigma =(2,3,1)$. We need to handle
\begin{equation}\nonumber
\sum_{\substack{ |n_j| \leq N, n_j \neq 0 \\ n_1 + n_2 + n_3 = 0 \\ \max_j(|n_j|) >M} } 
 \frac{ |n_1|^s |n_{2}|^s n_2 n_{3} 
(|n_2 + n_3|^s - |n_2|^s)  (|n_{3} + n_{1}|^s - |n_{3}|^s)     }{ |n_1|^{2s +1} |n_2|^{2s +1} |n_3|^{2s +1} }\,.
\end{equation}
We distinguish
\begin{enumerate}[(A)]
\item $|n_2| \leq 2 |n_3|$, $|n_3| \leq 2 |n_1|$;
\item $|n_2| \leq 2 |n_3|$, $|n_3| > 2 |n_1|$;
\item $|n_2| > 2 |n_3|$, $|n_3| \leq 2 |n_1|$;    
\item $|n_2| > 2 |n_3|$, $|n_3| > 2 |n_1|$.
\end{enumerate}

In the case (A) 
we use \eqref{EasysMainBound}  to bound $\big| |n_2 + n_3|^s - |n_2|^s \big| \lesssim |n_3|^s$
and  $\big| |n_{3} + n_{1}|^s - |n_{3}|^s\big|\lesssim  |n_1|^s $ so that
\begin{equation}\nonumber
\sum_{\substack{ |n_j| \leq N, n_j \neq 0 \\ n_1 + n_2 + n_3 = 0 \\ \max_j(|n_j|) >M \\ |n_2| \leq 2 |n_3| \, \mbox{and} \, |n_3| \leq 2 |n_1|} } 
 \frac{ |n_1|^s |n_{2}|^s n_2 n_{3} 
|n_3|^s |n_1|^s     }{ |n_1|^{2s +1} |n_2|^{2s +1} |n_3|^{2s +1} }
\leq \sum_{\substack{ |n_j| \leq N, n_j \neq 0 \\ n_1 + n_2 + n_3 = 0 \\ \max_j(|n_j|) >M } } 
 \frac{ |n_1|^{2s} |n_{2}|^{s+1} |n_{3}|^{s + 1} 
      }{ |n_1|^{2s +1} |n_2|^{2s +1} |n_3|^{2s +1} } \, ,
\end{equation}
that is estimated as \eqref{2Bound}.

In the case (B) 
we use \eqref{EasysMainBound} to bound $\big| |n_2 + n_3|^s - |n_2|^s \big| \lesssim |n_3|^s$
and \eqref{HardsMainBound}  to bound $\big| |n_{3} + n_{1}|^s - |n_{3}|^s\big|\lesssim |n_{3}|^{s-1} |n_1| $, thus
\begin{equation}\nonumber
\sum_{\substack{ |n_j| \leq N, n_j \neq 0 \\ n_1 + n_2 + n_3 = 0 \\ \max_j(|n_j|) >M \\ |n_2| \leq 2 |n_3| \, \mbox{and} \, |n_3| > 2 |n_1|} } 
 \frac{ |n_1|^s |n_{2}|^s n_2 n_{3} 
|n_3|^s |n_3|^{s-1} |n_1|      }{ |n_1|^{2s +1} |n_2|^{2s +1} |n_3|^{2s +1} }
\leq \sum_{\substack{ |n_j| \leq N, n_j \neq 0 \\ n_1 + n_2 + n_3 = 0 \\ \max_j(|n_j|) >M } } 
 \frac{ |n_1|^{s+1} |n_{2}|^{s+1} |n_{3}|^{2s} 
      }{ |n_1|^{2s +1} |n_2|^{2s +1} |n_3|^{2s +1} } \, ,
\end{equation}
that is again estimated as \eqref{2Bound}.

In the case (C) 
we use \eqref{HardsMainBound}  to bound $\big| |n_2 + n_3|^s - |n_2|^s \big| \lesssim |n_2|^{s-1} |n_3|$
and \eqref{EasysMainBound}  to bound $\big| |n_{3} + n_{1}|^s - |n_{3}|^s\big|\lesssim  |n_1|^s $, thus
\begin{equation}\nonumber
\sum_{\substack{ |n_j| \leq N, n_j \neq 0 \\ n_1 + n_2 + n_3 = 0 \\ \max_j(|n_j|) >M \\ |n_2| > 2 |n_3| \, \mbox{and} \, |n_3| \leq 2 |n_1|} } 
 \frac{ |n_1|^s |n_{2}|^s n_2 n_{3} 
|n_2|^{s-1} |n_3| |n_1|^s      }{ |n_1|^{2s +1} |n_2|^{2s +1} |n_3|^{2s +1} }
\leq \sum_{\substack{ |n_j| \leq N, n_j \neq 0 \\ n_1 + n_2 + n_3 = 0 \\ \max_j(|n_j|) >M } } 
 \frac{ |n_1|^{2s} |n_{2}|^{2s} |n_{3}|^{2} 
      }{ |n_1|^{2s +1} |n_2|^{2s +1} |n_3|^{2s +1} } \, ,
\end{equation}
that is  estimated as \eqref{1Bound}.

In the case (D) 
we use \eqref{HardsMainBound}  to bound $\big| |n_2 + n_3|^s - |n_2|^s \big| \lesssim |n_2|^{s-1} |n_3|$
and $\big| |n_{3} + n_{1}|^s - |n_{3}|^s\big|\lesssim  |n_3|^{s-1} |n_1| $. We arrive to 
\begin{equation}\nonumber
\sum_{\substack{ |n_j| \leq N, n_j \neq 0 \\ n_1 + n_2 + n_3 = 0 \\ \max_j(|n_j|) >M \\ |n_2| > 2 |n_3| \, \mbox{and} \, |n_3| > 2 |n_1|} } 
 \frac{ |n_1|^s |n_{2}|^s n_2 n_{3} 
|n_2|^{s-1} |n_3| |n_3|^{s-1}  |n_1|      }{ |n_1|^{2s +1} |n_2|^{2s +1} |n_3|^{2s +1} }
\leq \sum_{\substack{ |n_j| \leq N, n_j \neq 0 \\ n_1 + n_2 + n_3 = 0 \\ \max_j(|n_j|) >M } } 
 \frac{ |n_1|^{s + 1} |n_{2}|^{2s} |n_{3}|^{s+1} 
      }{ |n_1|^{2s +1} |n_2|^{2s +1} |n_3|^{2s +1} } \, ,
\end{equation}
that is again estimated as \eqref{2Bound}.

$\bullet$ Permutation $\sigma =(3,2,1)$. We need to handle
\begin{equation}\nonumber
\sum_{\substack{ |n_j| \leq N, n_j \neq 0 \\ n_1 + n_2 + n_3 = 0 \\ \max_j(|n_j|) >M} } 
\frac{ |n_1|^s |n_{3}|^s n_2^2  
(|n_2 + n_3|^s - |n_2|^s)  (|n_{2} + n_{1}|^s - |n_{2}|^s)     }{ |n_1|^{2s +1} |n_2|^{2s +1} |n_3|^{2s +1} }
\end{equation}

We distinguish
\begin{enumerate}[(A)]
\item $|n_2| \leq 2 |n_3|$, $|n_2| \leq 2 |n_1|$;
\item $|n_2| \leq 2 |n_3|$, $|n_2| > 2 |n_1|$;
\item $|n_2| > 2 |n_3|$, $|n_2| \leq 2 |n_1|$;    \qquad (same as (B) switching $n_1 \leftrightarrow n_3$)
\item $|n_2| > 2 |n_3|$, $|n_2| > 2 |n_1|$.
\end{enumerate}

In the case (A) 
we use \eqref{EasysMainBound}  to bound $\big| |n_2 + n_3|^s - |n_2|^s \big| \lesssim |n_3|^{s} $
and $\big| |n_{2} + n_{1}|^s - |n_{2}|^s \big|\lesssim  |n_1|^{s} $. We arrive to 
\begin{equation}\nonumber
\sum_{\substack{ |n_j| \leq N, n_j \neq 0 \\ n_1 + n_2 + n_3 = 0 \\ \max_j(|n_j|) >M \\ |n_2| \leq 2 |n_3| \, \mbox{and} \, |n_2| \leq 2 |n_1|} } 
\frac{ |n_1|^s |n_{3}|^s n_2^2  
|n_3|^s  |n_{1}|^s      }{ |n_1|^{2s +1} |n_2|^{2s +1} |n_3|^{2s +1} }
\leq \sum_{\substack{ |n_j| \leq N, n_j \neq 0 \\ n_1 + n_2 + n_3 = 0 \\ \max_j(|n_j|) >M } } 
\frac{ |n_1|^{2s} |n_2|^2 |n_{3}|^{2s}      }{ |n_1|^{2s +1} |n_2|^{2s +1} |n_3|^{2s +1} } \, ,
\end{equation}
that is estimated as \eqref{1Bound}.

In the case (B) 
we use \eqref{EasysMainBound}  to bound $\big| |n_2 + n_3|^s - |n_2|^s \big| \lesssim |n_3|^{s} $
and  \eqref{HardsMainBound} to bound $\big| |n_{2} + n_{1}|^s - |n_{2}|^s \big|\lesssim |n_{2}|^{s-1} |n_1| $. We arrive to 
\begin{equation}\nonumber
\sum_{\substack{ |n_j| \leq N, n_j \neq 0 \\ n_1 + n_2 + n_3 = 0 \\ \max_j(|n_j|) >M \\ |n_2| \leq 2 |n_3| \, \mbox{and} \, |n_2| > 2 |n_1|} } 
\frac{ |n_1|^s |n_{3}|^s n_2^2  
|n_3|^s  |n_{2}|^{s-1} |n_1|      }{ |n_1|^{2s +1} |n_2|^{2s +1} |n_3|^{2s +1} }
\leq \sum_{\substack{ |n_j| \leq N, n_j \neq 0 \\ n_1 + n_2 + n_3 = 0 \\ \max_j(|n_j|) >M } } 
\frac{ |n_1|^{s+1} |n_2|^{s+1} |n_{3}|^{2s}      }{ |n_1|^{2s +1} |n_2|^{2s +1} |n_3|^{2s +1} } \, ,
\end{equation}
that is  estimated as \eqref{2Bound}.

The case (C) is the same as (B) exchanging $n_1 \leftrightarrow n_3$. 

In the case (D) 
we use \eqref{HardsMainBound}  to bound $\big| |n_2 + n_3|^s - |n_2|^s \big| \lesssim |n_2|^{s-1} |n_3| $
and $\big| |n_{2} + n_{1}|^s - |n_{2}|^s \big|\lesssim  |n_2|^{s-1}  |n_1| $. We arrive to 
\begin{equation}\nonumber
\sum_{\substack{ |n_j| \leq N, n_j \neq 0 \\ n_1 + n_2 + n_3 = 0 \\ \max_j(|n_j|) >M \\ |n_2| > 2 |n_3| \, \mbox{and} \, |n_2| > 2 |n_1|} } 
\frac{ |n_1|^s |n_{3}|^s n_2^2  
|n_2|^{s-1} |n_3| |n_2|^{s-1} |n_1|      }{ |n_1|^{2s +1} |n_2|^{2s +1} |n_3|^{2s +1} }
\leq \sum_{\substack{ |n_j| \leq N, n_j \neq 0 \\ n_1 + n_2 + n_3 = 0 \\ \max_j(|n_j|) >M } } 
\frac{ |n_1|^{s+1} |n_2|^{2s} |n_{3}|^{s+1}      }{ |n_1|^{2s +1} |n_2|^{2s +1} |n_3|^{2s +1} } \, ,
\end{equation}
that is estimated as \eqref{2Bound}.

$\bullet$ Permutation $\sigma =(3,1,2)$. We need to handle
\begin{equation}\nonumber
\sum_{\substack{ |n_j| \leq N, n_j \neq 0 \\ n_1 + n_2 + n_3 = 0 \\ \max_j(|n_j|) >M} } 
\frac{ |n_1|^s |n_{3}|^s n_2 n_{1} 
(|n_2 + n_3|^s - |n_2|^s)  (|n_{1} + n_{2}|^s - |n_{1}|^s)     }{ |n_1|^{2s +1} |n_2|^{2s +1} |n_3|^{2s +1} }
\end{equation}
We note that renaming the indeces $(n_1, n_3, n_2)$ with $(n_2,n_1,n_3)$ this reduces to
$$
\sum_{\substack{ |n_j| \leq N, n_j \neq 0 \\ n_1 + n_2 + n_3 = 0 \\ \max_j(|n_j|) >M} } 
 \frac{ |n_{2}|^s |n_1|^s   n_{3} n_2 
  (|n_{3} + n_{1}|^s - |n_{3}|^s)  (|n_2 + n_3|^s - |n_2|^s)     }{ |n_1|^{2s +1} |n_2|^{2s +1} |n_3|^{2s +1} },
$$
that is the contribution of the permutation $\sigma = (2,3,1)$. This completes the proof.
\end{proof}

\begin{lemma}\label{Lemma:Lemma:DecayF2}
Let $s >\frac12$. We have for all $N >M$
\begin{equation}\label{DecayF3}
\| F_{3,N} - F_{3,M} \|_{L^{2}(\gamma_s)} \lesssim \frac{1}{\sqrt{M}}\,.
\end{equation}
\end{lemma}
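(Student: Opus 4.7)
The plan is to proceed in close analogy with the proof of Lemma \ref{Lemma:DecayF1}: expand $F_{3,N} - F_{3,M}$ as a trilinear Fourier form, compute its $L^{2}(\gamma_s)$ norm via the Wick formula \eqref{eq:Wick}, and then control the resulting three-fold sums by the convolution estimates in Remark \ref{rmk:conv1}. The main new feature compared with Lemmas \ref{Lemma:DecayF1}--\ref{Lemma:DecayF2} is that, thanks to the smoothing symbol $(1+|D_x|)^{-1}$ in \eqref{Def:F3N}, the trilinear coefficient will carry an effective weight $\simeq |n_1|^{2s}$ only in the index $n_1$, i.e.\ one power less than in the $F_{1,N}$/$F_{2,N}$ cases. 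This is what will give the uniform rate $M^{-1/2}$ for the full range $s>\tfrac12$.

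\textbf{Fourier form and Wick contraction.} From \eqref{Def:F3N} and Plancherel, at $t = 0$ I would write
$$
F_{3,N} - F_{3,M} \;=\; c \sum_{n \in A_{N,M}} \frac{-i\, n_1 |n_1|^{2s}}{1+|n_1|}\, \hat u(n_1)\hat u(n_2)\hat u(n_3),
$$
with a coefficient depending only on $n_1$. Squaring and applying \eqref{eq:Wick} with $\ell=3$, the six permutations $\sigma \in S_3$ can be grouped by the value of $\sigma(1) \in \{1,2,3\}$ into three essentially distinct contributions, each appearing with multiplicity two.

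\textbf{Estimating the contributions.} In the diagonal case $\sigma(1) = 1$ the factor $|c(n_1)|^2 \simeq \meanv{n_1}^{4s}$ combines with the Gaussian weight $\meanv{n_1}^{-(2s+1)}$ and I will reduce to
$$
\sum_{n \in A_{N,M}} \frac{\meanv{n_1}^{2s-1}}{\meanv{n_2}^{2s+1}\meanv{n_3}^{2s+1}}\,.
$$
Using $|n_1| \leq |n_2|+|n_3|$ and the elementary bound $\meanv{n_1}^{2s-1} \lesssim \meanv{n_2}^{2s-1} + \meanv{n_3}^{2s-1}$, this splits into two pieces of the form $\sum \meanv{n_i}^{-2}\meanv{n_j}^{-(2s+1)}$. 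Since the constraints $n_1+n_2+n_3 = 0$ and $\max_j|n_j|>M$ force $\max(|n_2|,|n_3|)\gtrsim M$, a direct summation (with a tail sum in the index known to be $\gtrsim M$) yields an $M^{-1}$ bound for any $s>\tfrac12$. For the off-diagonal cases $\sigma(1) \in \{2,3\}$ the coefficient simplifies to $\lesssim \meanv{n_1}^{-1}\meanv{n_2}^{-1}\meanv{n_3}^{-(2s+1)}$ (or its $n_2\leftrightarrow n_3$ variant). Here I freeze the index forced to be large by the support constraint, sum in the remaining variable via \eqref{eq:case-IV-conv}--\eqref{eq:case-IV-convM}, and again obtain an $M^{-1}$ decay. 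Combining the six contributions then gives $\|F_{3,N}-F_{3,M}\|_{L^{2}(\gamma_s)}^2 \lesssim M^{-1}$, hence \eqref{DecayF3}.

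\textbf{Main obstacle.} The delicate point is the diagonal term $\sigma(1)=1$: the factor $\meanv{n_1}^{2s-1}$ grows with $s$ and could, at first sight, threaten summability for large $s$. The resolution is that the extra smoothing built into $F_{3,N}$ leaves a full double decay $\meanv{n_2}^{-(2s+1)}\meanv{n_3}^{-(2s+1)}$ in the remaining variables, which comfortably absorbs the growth once one applies the triangle inequality in $n_1$. This is precisely why the rate here is the uniform $M^{-1/2}$, instead of the $s$-dependent rate $M^{-(2s-1)/4}$ valid in the analogous regime for $F_{1,N}$ and $F_{2,N}$, and why the argument works down to $s = \tfrac12$.
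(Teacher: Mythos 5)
Your proposal is correct and follows essentially the same route as the paper: expand $F_{3,N}-F_{3,M}$ as a trilinear Fourier form, apply the Wick formula \eqref{eq:Wick}, and control the resulting convolution sums via Remark \ref{rmk:conv1}. Your presentation is a bit more streamlined because you invoke the zero-sum constraint $n_2+n_3=-n_1$ immediately, rewriting the symbol as a function of $n_1$ alone, whereas the paper keeps $|n_2+n_3|^s$ and $|n_{\sigma(2)}+n_{\sigma(3)}|^s$ explicit and bounds them term-by-term (e.g.\ $|n_2+n_3|^s|n_1+n_3|^s\lesssim_s(|n_2|^s+|n_3|^s)(|n_1|^s+|n_3|^s)$), which produces a few extra sub-cases that ultimately reduce to the same two sum types you isolate; in the diagonal case your triangle inequality $\meanv{n_1}^{2s-1}\lesssim\meanv{n_2}^{2s-1}+\meanv{n_3}^{2s-1}$ even decouples the variables so that no convolution lemma is needed there. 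Both arguments yield the same uniform $M^{-1}$ decay of the squared $L^2(\gamma_s)$ norm.
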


\begin{proof}
We have
$$
F_{3,N} - F_{3,M} = \frac{2i}{\pi} \sum_{n \in A_{N.M}} |n_1|^s \frac{(n_2+n_3)|n_2+n_3|^s}{1 + |n_2+n_3|}  \, u(n_1)  u(n_2) u(n_3)   \,.
$$
Taking the modulus squared
\begin{equation}\label{SquareBeforeWick3}
|F_{3,N} - F_{3,M}|^2 =  \frac{4}{\pi^2} \sum_{(n,m) \in A_{N,M}^{2} } 
|n_1|^s \frac{(n_2+n_3)|n_2+n_3|^s}{1 + |n_2+n_3|} 
|m_1|^s \frac{(m_2+m_3)|m_2+m_3|^s}{1 + |m_2+m_3|}   \prod_{j=1}^{3}u(n_j)u(-m_j) 
\end{equation}
and using 
the
Wick formula \eqref{eq:Wick} with~$\ell=3$ we arrive to 
\begin{align}\nonumber
\| F_{3,N} - F_{3,M} \|_{L^{2}(\gamma_s)}^2 &= \frac{4}{\pi^2}
\sum_{\sigma \in S_3} \sum_{n \in A_{N,M}} 
\frac{ |n_1|^s |n_{\sigma(1)}|^s \frac{(n_2+n_3)|n_2+n_3|^s}{1 + |n_2+n_3|} \frac{(n_{\sigma(2)}+n_{\sigma(3)})|n_{\sigma(2)}+n_{\sigma(3)}|^s}{1 + |n_{\sigma(2)}+n_{\sigma(3)}|}   }{ |n_1|^{2s +1} |n_2|^{2s +1} |n_3|^{2s +1} }
\\    \label{WRTDTHis3}
&
\leq \frac{4}{\pi^2} 
\sum_{\sigma \in S_3} \sum_{n \in A_{N,M}} 
\frac{ |n_1|^s |n_{\sigma(1)}|^s |n_2+n_3|^s |n_{\sigma(2)}+n_{\sigma(3)}|^s   }{ |n_1|^{2s +1} |n_2|^{2s +1} |n_3|^{2s +1} }.
\end{align}

It is easy to see that the contractions $\sigma=(1,2,3)$ and $\sigma=(1,3,2)$ give the same contributions. Also, the remaining contractions gives all the same contributions.
Thus we may reduce to the cases (say) $\sigma=(1,2,3)$ and $\sigma=(2,1,3)$. 

The contribution relative to $\sigma=(1,2,3)$ is
\begin{align}\nonumber
\sum_{\substack{ |n_j| \leq N, n_j \neq 0 \\ n_1 + n_2 + n_3 = 0 \\ \max_j(|n_j|) >M} }  &
\frac{ |n_1|^{2s}  |n_2+n_3|^{2s}  }{ |n_1|^{2s +1} |n_2|^{2s +1} |n_3|^{2s +1} }
\\ 
& \lesssim_s
\sum_{\substack{ |n_j| \leq N, n_j \neq 0 \\ n_1 + n_2 + n_3 = 0 \\ \max_j(|n_j|) >M} } 
\frac{ |n_1|^{2s}   |n_2|^{2s}  }{ |n_1|^{2s +1} |n_2|^{2s +1} |n_3|^{2s +1} }
+
\sum_{\substack{ |n_j| \leq N, n_j \neq 0 \\ n_1 + n_2 + n_3 = 0 \\ \max_j(|n_j|) >M} } 
\frac{ |n_1|^{2s}   |n_3|^{2s}  }{ |n_1|^{2s +1} |n_2|^{2s +1} |n_3|^{2s +1} }\nn\\
\end{align}
By the symmetry $n_2 \leftrightarrow n_3$ it suffices to handle the first term on the r.h.s., for which we have
\begin{align}
&\lesssim_s \sum_{\substack{ |n_1|,|n_2| \leq N  \\ |n_1| \gtrsim M} } 
\frac{ 1}{ \meanv{n_1}^{2s+1}\meanv{n_2} \meanv{n_2-n_1} }+\sum_{\substack{ |n_1|,|n_2| \leq N  \\ |n_2| \gtrsim M} } 
\frac{ 1}{ \meanv{n_1}^{2s+1}\meanv{n_2} \meanv{n_2-n_1} }\nn\\
&\lesssim_s \sum_{|n_2| \gtrsim M}\frac{1}{\meanv{n_2}}\sum_{n_1\in\Z } \frac{1}{ \meanv{n_1}^{2s+1}\meanv{n_2-n_1}}
\label{1BoundCaseF3}\\
&\qquad \qquad \qquad  + \sum_{n_2\in\Z}\frac{1}{\meanv{n_2} }\sum_{\substack{ |n_1| \gtrsim M} } \frac{1}{ \meanv{n_1}^{2s+1}\meanv{n_2-n_1} }
\,. \label{1BoundCaseF3-II}
\end{align}
The inner sums of (\ref{1BoundCaseF3}) and (\ref{1BoundCaseF3-II}) are estimated by 
(\ref{eq:case-IV-conv}) and (\ref{eq:case-IV-convM}). We have
\bea
\eqref{1BoundCaseF3}&\leq&\sum_{\substack{ |n_2| >M} } \frac{1}{ \meanv{n_2}^{2}}
\lesssim \frac{1}{M}\,,\\
\eqref{1BoundCaseF3-II}&\lesssim&\sum_{\substack{ |n_2|> M} } \frac{1}{ \meanv{n_2}^{2}}
\lesssim \frac{1}{M}\,.\\
\eea

The contribution relative to $\sigma=(2,1,3)$ is 
\begin{equation}
\sum_{\substack{ |n_j| \leq N, n_j \neq 0 \\ n_1 + n_2 + n_3 = 0 \\ \max_j(|n_j|) >M} }  
\frac{ |n_1|^{s} |n_2|^{s}  |n_2+n_3|^{s} |n_1+n_3|^{s}  }{ |n_1|^{2s +1} |n_2|^{2s +1} |n_3|^{2s +1} }\,.
 \label{2BoundCaseF3OLD}
\end{equation}
Using 
$$
|n_2+n_3|^{s} |n_1+n_3|^{s} \lesssim_s (|n_2|^s+|n_3|^{s}) (|n_1|^s+|n_3|^{s}) 
$$
and exchanging the indices, we can reduce \eqref{2BoundCaseF3OLD} to a sum of terms of the form
\bea
\sum_{\substack{ |n_j| \leq N, n_j \neq 0 \\ n_1 + n_2 + n_3 = 0 \\ \max_j(|n_j|) >M} }  
\frac{ |n_1|^{s} |n_2|^{s}  |n_3|^{2s} }{ |n_1|^{2s +1} |n_2|^{2s +1} |n_3|^{2s +1} }&\lesssim& \sum_{\substack{ |n_j| \leq N \\ n_1 + n_2 + n_3 = 0 \\ \max_j(|n_j|) >M} }  
\frac{1}{ \meanv{n_1}\meanv{n_2}^{s +1} \meanv{n_3}^{s +1}}\nn\\
&\leq&\sum_{\substack{\max{(|n_1|,|n_2|)} \gtrsim M} }  
\frac{1}{ \meanv{n_1}\meanv{n_2}^{s +1} \meanv{n_1-n_2}^{s +1}} \nn\\
&\leq&\sum_{|n_1|\gtrsim M}  
\frac{1}{ \meanv{n_1}}\sum_{n_2\in\Z}\frac{1}{\meanv{n_2}^{s +1} \meanv{n_1-n_2}^{s +1}}\label{2BoundCaseF3}\\
&+&\sum_{ n_1\in\Z}  
\frac{1}{ \meanv{n_1}}\sum_{|n_1|\gtrsim M}\frac{1}{\meanv{n_2}^{s +1} \meanv{n_1-n_2}^{s +1}}\label{2BoundCaseF3-II}\,.
\eea
Again \eqref{2BoundCaseF3} and \eqref{2BoundCaseF3-II} can be estimated by using 
(\ref{eq:case-II-conv}) and (\ref{eq:case-II-convM}). We have
\bea
\eqref{2BoundCaseF3}&\leq&\sum_{|n_1|\gtrsim M}  
\frac{1}{ \meanv{n_1}^{s+\frac32}}\lesssim \frac{1}{M^{s+\frac12}}\,,\\
\eqref{2BoundCaseF3-II}&\leq&\sum_{ |n_1|\gtrsim M}  
\frac{1}{ \meanv{n_1}^{s+\frac32}}+\frac{1}{M^{s+\frac12}}\sum_{ n_1\in\Z}\frac{1}{\meanv{n_1}^{s+\frac32}}
\lesssim\frac{1}{M^{s+\frac12}} \,. 
\eea
\end{proof}


\section{Tail estimates}\label{sect:prob}

The goal of this section is to prove the following proposition, that is the key quantitative estimate 
in the study of the quasi-invariance of $\tilde\g_{s}$.

\begin{proposition}\label{prop:Lpdifficile2}
Let $s > 1$. For all $N\in\N\cup\{\infty\}$ it holds
\be\label{eq:Lpdifficile2}
\left\| F_N   \right\|_{L^p(\tilde\g_{s})}\lesssim C(R) p  \,.
\ee
\end{proposition}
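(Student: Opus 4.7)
The bound $\|F_N\|_{L^p(\tilde\gamma_s)}\lesssim C(R)p$ amounts to $F_N$ being sub-exponential with parameter $C(R)$ uniformly in $N$. Since each of $F_{1,N},F_{2,N},F_{3,N}$ (Proposition~\ref{prop:growth-Hs-norm}) is a third-order Wiener chaos in the Gaussians $\{g_n\}$, the raw Wiener chaos hypercontractivity yields only $\lesssim p^{3/2}$; the extra $\sqrt{p}$ improvement must be provided by the cutoff $\mathcal{E}[u]\leq R$, which furnishes the uniform bound $\|P_N u\|_{H^{1/2}}\leq CR$.

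The strategy is to isolate a single Gaussian ``pivot'' in each $F_{i,N}$ and use the Bernstein inequality~\eqref{eq_Bernstein} on the resulting bilinear core. Expanding in Fourier and choosing $n_3$ as the pivot,
\[
F_{i,N} = \sum_{\substack{n_1+n_2+n_3=0\\ n_j\neq 0,\, |n_j|\leq N}} c_i(n_1,n_2,n_3)\,\hat u(n_1)\hat u(n_2)\hat u(n_3) = \sum_{n_3} \hat u(n_3)\,B_{i,n_3}(u),
\]
where $B_{i,n_3}(u)=\sum_{n_1+n_2=-n_3}c_i(n_1,n_2,n_3)\,\hat u(n_1)\hat u(n_2)$. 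Split $B_{i,n_3}=B_{i,n_3}^{\mathrm{off}}+B_{i,n_3}^{\mathrm{res}}$, putting in $B^{\mathrm{res}}$ the two resonant configurations $n_1=n_3$ and $n_2=n_3$ (the other resonances $n_j=-n_3$ are excluded by $n_j\neq 0$), so that $B_{i,n_3}^{\mathrm{off}}$ is independent of $g_{\pm n_3}$. The resonant piece is a cubic form of the shape $\sum_{n_3}\kappa_{n_3}\,g_{n_3}^2 g_{-2n_3}$ with $|\kappa_{n_3}|\lesssim|n_3|^{-s-1/2}$; after Wick-ordering, a direct computation analogous to Lemma~\ref{Lemma:DecayF1} yields a contribution absorbable into $C(R)p$.

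The crux is the off-resonant contribution. Conditioning on $\{g_m\}_{m\neq\pm n_3}$, the off-resonant sum becomes a centred Gaussian in the pivot variables with conditional variance $V(u):=\sum_{n_3}|B_{i,n_3}^{\mathrm{off}}|^2/|n_3|^{2s+1}$, so Gaussian moments give
\[
\|F_{i,N}^{\mathrm{off}}\|_{L^p(\tilde\gamma_s)} \lesssim \sqrt{p}\,\bigl\|V(u)^{1/2}\bigr\|_{L^p(\tilde\gamma_s)}.
\]
Expanding the square, $V(u)$ rewrites as a bilinear expression in $\{|g_n|^2\}$ whose deterministic kernel carries a frequency weight of type $|n|$, matching the weight in $\|u\|_{H^{1/2}}^2=\sum_n|n||\hat u(n)|^2$. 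Charging this weight against the cutoff produces a factorization of the shape $V(u)\lesssim \|P_Nu\|_{H^{1/2}}^2\,Q(u)\leq CR^2\,Q(u)$, where $Q(u)$ is a Hilbert--Schmidt quadratic form in the centred chi-squares $|g_n|^2-1$ plus a bounded deterministic constant. Bernstein~\eqref{eq_Bernstein} then gives $\|Q\|_{L^{p/2}(\tilde\gamma_s)}\lesssim p$, whence $\|F_{i,N}\|_{L^p(\tilde\gamma_s)}\lesssim \sqrt{p}\cdot R\sqrt{p}=C(R)p$.

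The main obstacle is the explicit kernel factorization $V(u)\lesssim \|P_Nu\|_{H^{1/2}}^2\,Q(u)$ for each $i=1,2,3$: it requires a dyadic case analysis analogous to (A)--(D) in the proof of Lemma~\ref{Lemma:DecayF2}, combined with the convolution bounds of Lemma~\ref{lemma:conv1} applied at their endpoints (which is where $s>1$ enters); the Kato--Ponce commutator structure of $F_{2,N}$ and the $\partial_x/(1+|D_x|)$ smoothing in $F_{3,N}$ supply the extra derivative needed to close the estimate at the critical dispersion level.
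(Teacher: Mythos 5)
Your approach is genuinely different from the paper's, and it contains several gaps that prevent it from closing.

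The paper does not attempt any pivot or conditioning argument. Instead it (i) proves the deterministic bound $|F_N|\lesssim \|P_N u\|_{H^s}^2\,\|\partial_x P_N u\|_{L^\infty}$ (Proposition~\ref{prop:energy}); (ii) establishes sub-exponential tail bounds for each factor under $\tilde\gamma_s$ by splitting dyadically, using the cutoff $\mathcal E[u]\leq R$ to kill the low-frequency blocks almost surely and Hoeffding/Bernstein for the high-frequency ones (Lemmas~\ref{lemma:subexp-inf}, \ref{lemma:subexpXH}); and (iii) because Lemma~\ref{lemma:subexpXH} is only valid for $t\gtrsim(\ln N)^{2/\kappa}$, patches the small-$t$ regime by comparing $F_N$ to $F_T$ with $T\simeq t^{1/(2\upsilon)}$ via the $L^2$-convergence of Proposition~\ref{prop:Wick} upgraded by hypercontractivity (Propositions~\ref{prop:exp0}--\ref{prop:gamma1-largedev}). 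The exponential tail $\tilde\gamma_s(|F_N|\geq t)\leq Ce^{-ct}$ then gives $\|F_N\|_{L^p}\lesssim p$.

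The gaps in your proposal are the following. First, the conditioning step is not well-posed as stated: in $\sum_{n_3}\hat u(n_3)B_{i,n_3}^{\mathrm{off}}(u)$ each summand conditions on a \emph{different} $\sigma$-algebra (the one omitting $g_{\pm n_3}$), so the full sum is not conditionally Gaussian given any single $\sigma$-algebra. One would need a decoupling inequality (de la Peña--Gin\'e) to replace the pivot variables by an independent copy before the Gaussian moment bound can be invoked, and this is not mentioned. Second, and more seriously, you need the moment bound in $L^p(\tilde\gamma_s)$, i.e.\ with the indicator $1_{\{\mathcal E[u]\leq R\}}$ inserted under the $\gamma_s$-expectation. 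Under $\tilde\gamma_s$ the Fourier modes are no longer jointly Gaussian, so the conditional law of $g_{n_3}$ is a truncated Gaussian, and decoupling inequalities do not commute with inserting an arbitrary indicator depending on all the modes; the clean estimate $\|F^{\mathrm{off}}_{i,N}\|_{L^p(\tilde\gamma_s)}\lesssim\sqrt p\,\|V^{1/2}\|_{L^p(\tilde\gamma_s)}$ is therefore not justified. Third, the factorization $V(u)\lesssim\|P_Nu\|_{H^{1/2}}^2\,Q(u)$ with $Q$ a genuinely Hilbert--Schmidt quadratic form is the heart of your argument, and you declare it but never verify it; it is far from obvious that the correct kernel structure emerges for all three $F_{i,N}$ and in particular for the commutator term $F_{2,N}$. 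Fourth, the resonant piece $\sum_{n_3}\kappa_{n_3}g_{n_3}^2 g_{-2n_3}$ is still third chaos after Wick-ordering, so raw hypercontractivity gives only $p^{3/2}$; you assert without proof that it is ``absorbable into $C(R)p$,'' but that again requires using the $H^{1/2}$-cutoff in a nontrivial way (e.g.\ a Cauchy--Schwarz that charges $\sum|g_n|^2/|n|^{2s}\leq R^2$). Finally, you make no attempt to track uniformity of constants in $N$, whereas the paper's proof hinges on a delicate interplay between the $N$-dependent window in Lemma~\ref{lemma:subexpXH} and the hypercontractive comparison $F_N\approx F_T$ that covers the complementary range. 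As it stands, the proposal sketches a plausible plan but leaves every substantive estimate unproved.
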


We state and prove immediately two useful tail bounds in view of Proposition \ref{prop:energy}.

\begin{lemma}\label{lemma:subexp-inf}
Let $s>1$ and $\kappa >0$. There is $c(R)>0$ such that for all $N \in \N \cup \{ \infty \}$ we have
\be\label{Lemma:Linfty}
\tilde\g_{s}(\|P_N\partial_xu\|_{L^{\infty}}\geq t^{\kappa})\lesssim e^{-c(R)  t^{2s\kappa}}\,. 
\ee
\end{lemma}
\begin{proof}
First we prove the statement for $s > 3/2$. We bound
\be
\|P_N\partial_xu\|_{L^{\infty}}\leq \sum_{j\in\N}2^j\sup_{x\in\T}|\D_jP_Nu|\leq \sum_{j\in\N}\sum_{|n| \simeq 2^j } 2^{j} |(P_N u)(n)|
\ee
and estimate the deviation probability for the r.h.s.

Let $j_t$ the largest  element of $\N$ such that
\begin{equation}\label{obvPreqInfty}
2^{j} < \frac {t^{\kappa}}{R} \quad \mbox{for} \quad j < j_t;
\end{equation}
we set $j_t =0$ if \eqref{obvPreqInfty} is never satisfied.
We split 
$$
\sum_{j\in\N}\sum_{|n| \simeq 2^j } 2^{j} |(P_N u)(n)| \leq 
\sum_{0 \leq j < j_t}\sum_{|n| \simeq 2^j } 2^{j} |(P_N u)(n)|
+
\sum_{j \geq j_t}\sum_{|n| \simeq 2^j } 2^{j} |(P_N u)(n)|\,. 
$$
The first summand is easily evaluated. Indeed
\be\label{eq:holder} 
\sum_{0 \leq j < j_t}\sum_{|n| \simeq 2^j } 2^{j} |(P_N u)(n)| \leq 2^{\frac{3j_t}{2}}\|\D_j u\|_{L^{2}}\lesssim 2^{j_t}\|\D_j u\|_{\dot H^{\frac12}}
\leq2^{j_t}R\leq t^\kappa\,,
\ee
where we first used the Cauchy--Schwartz and then the Bernstein inequality. Since the above inequality 
holds $\tilde \g_{s}$-a.s. we have
\be\label{eq:L-e1}
\tilde \g_{s}\Big(  \sum_{0 \leq j < j_t}\sum_{|n| \simeq 2^j } 2^{j} |(P_N u)(n)| \geq t^{\kappa}  \Big)=0\,.
\ee

To estimate the contribution for $j \geq j_t$ we 
introduce a sequence $\{ \sigma_j \}_{j \geq j_t}$ defined as  
\be\label{eq:sigma}
\s_j:=c_0(j+1-j_t)^{-2}\,, 
\ee 
where $c_0>0$ is sufficiently small in such a way that $\sum_{j \geq j_t}\s_j\leq 1$. 
Then we bound
\be\label{eq:L-interme}
\tilde\g_{s}\Big(\sum_{|n| \simeq 2^j } 2^{j} |(P_N u)(n)|\geq t^{\kappa}\Big)\leq \sum_{j\geq j_t}\tilde\g_{s}\Big(\sum_{|n| \simeq 2^j } |(P_N u)(n)|\geq 2^{-j}\s_jt^{\kappa}\Big)\,.
\ee
As the $\g_{s}$-expectation of $\sum_{|n| \simeq 2^j } |(P_N u)(n)|$ is bounded by 
$C2^{j\left(\frac12 - s\right)}$ and
\begin{equation}\label{Eq:Quatif}
C2^{j\left(\frac12 - s\right)} \leq \frac12 2^{-j}\s_jt^{\kappa} \quad \mbox{for $s > 3/2$ and $t > C_s$,}
\end{equation}
where $C_s$ is a sufficiently large constant (only depending on $s$)\footnote{For instance we can quantify $C_s$ as follows 
$C_s^{\kappa} = \sup_j \frac{2C}{c_0} 2^{j\left( \frac32 - s\right)}(j+1)^2$, noting that this is finite for $s > 3/2$. The restriction 
$t>C_s$ is harmless as the statement \eqref{Lemma:Linfty} is trivial for $t < C_s$.},  
we have as consequence of inequality~\eqref{eq_Hoeffing}
that
\bea\label{eq:5}
\tilde\g_{s}\Big(\sum_{|n| \simeq 2^j } |(P_N u)(n)|\geq 2^{-j}\s_jt^{\kappa}\Big)& \leq &C\exp\left(-c\frac{2^{-2 j}\s_j^2t^{2\k}}{\sum_{|n| \simeq 2^j}n^{-2s-1}}\right)\nn\\
&\leq&C\exp\left( -ct^{2\k}\s_j^22^{2j(s-1))} \right)\,;
\eea
Thus
\be\label{RestrOnS}
\mbox{r.h.s. of (\ref{eq:L-interme})}\lesssim\sum_{j\geq j_t}e^{-c \s_{j}^{2} 2^{j(2s-2)}t^{2 \kappa}}
\lesssim
e^{-c  2^{j_t(2s - 2)}t^{2 \kappa}}
\lesssim e^{-c\frac{t^{ 2s\kappa }}{R^{2s-2}}}\,,
\ee
that concludes the proof when $s >3/2$ (note that the second inequality is in fact justified as long as $s>1$). To handle the case $1 < s \leq 3/2$ we note that the above 
argument still works as long as we restrict to frequencies $j$ such that\footnote{Note that the second inequality is true for 
$j$ sufficiently large. This is again harmless since small frequencies are handled using \eqref{obvPreqInfty}, \eqref{eq:holder}.} 
\begin{equation}
C2^{j\left(\frac12 - s\right)} <  \frac12 2^{-j(1 + \varepsilon)}t^{\kappa} < \frac12 2^{-j}\s_jt^{\kappa};
\end{equation}
where $\varepsilon >0$ will be later chosen sufficiently small; in fact such that $\varepsilon < s - \frac12$ (we could then have chosen the sequence $\s_j$ so that
$2^{-j \varepsilon} < \s_j$).
Thus, denoting with $j^{*}_t$ the largest integer such that 
\begin{equation}\label{sttis}
C2^{ j^{*}_t \left(\frac12 - s\right)} <  \frac12 2^{-j^{*}_t(1 + \varepsilon)} t^{\kappa} 
\end{equation}
 holds, we need to 
handle the frequencies $j > j^{*}_t$ (we set $j^{*}_t =0$ if \eqref{sttis} is never satisfied). 
Namely, it suffices to show 
$$
\tilde\g_{s}(\|  (\Id - P_{2^{j^{*}_t}}) P_N \partial_x u \|_{L^{\infty}}\geq t^{\kappa})
\lesssim e^{-ct^{2s\kappa}}\,.
$$
Note that for $s>1/2$ we have $j^{*}_t \gg_{t} j_t$ (in fact we gain a power of $t$ working with 
$j^{*}_t$ in place of~$j_t$). We have by definition of 
$j^{*}_t$ that
\begin{equation}\label{sttis2}
2^{j^{*}_t \left(\frac12 - s\right)} \gtrsim  2^{-j^{*}_t(1 + \varepsilon)}  t^{\kappa}, 
\end{equation}
namely
\begin{equation}\label{sttis3}
2^{j^{*}_t} \gtrsim  t^{\frac{\kappa}{\frac32 + \varepsilon - s}}. 
\end{equation}
Now we bound as above
\begin{equation}\label{eq:L-interme2}
\tilde\g_{s}(\|  (\Id - P_{2^{j^{*}_t}}) P_N \partial_x u \|_{L^{\infty}} \leq \sum_{j > j^{*}_t}
\tilde\g_{s}(\|  \Delta_{j} P_N \partial_x u \|_{L^{\infty}} > \sigma_j t^{\kappa}).
\end{equation}
Then we use that for all $\varepsilon' >0$ we have 
\begin{equation}\label{AddREf?}
\g_{s}(\|  \Delta_{j} P_N \partial_x u \|_{L^{\infty}} > \sigma_j t^{\kappa}) \lesssim 
C\exp\left( -ct^{2\k} \s_j^2 2^{2j(s-1 - \varepsilon') )} \right).
\end{equation}
This is true since $\| \Delta_{j} P_N \partial_x u \|_{L^2(\g_s)} \simeq C 2^{j(1-s)}$. Using this fact we can show 
that for all $q >2$ we have $\| \Delta_{j} P_N \partial_x u \|_{L^q(\g_s)} \simeq \sqrt{q} 2^{j(1-s)}$. 
Using this 
and the Minkowski's integral inequality we can prove that for all $p < \infty$ we have   
$$\| \| \Delta_{j} P_N \partial_x u \|_{L^p_x} \|_{L^q(\g_s)} \simeq \sqrt{q} 2^{j(1-s)}, \quad \mbox{for all $q > p$.}$$ 
Using this estimate for the momenta, we can prove the following tail bound
$$
\g_{s}(\|  \Delta_{j} P_N \partial_x u \|_{L^{p}} > \sigma_j t^{\kappa}) \lesssim 
C\exp\left( -ct^{2\k} \s_j^2 2^{2j(s-1) )} \right).
$$ 
From this inequality and  
$\|  \Delta_{j} P_N \partial_x u \|_{L^{\infty}} \lesssim 2^{j/p} \|  \Delta_{j} P_N \partial_x u \|_{L^{p}}$ we 
get \eqref{AddREf?}.
Thus
\be
\mbox{r.h.s. of (\ref{eq:L-interme2})}\lesssim\sum_{j > j_t}e^{-c \s_{j}^{2} 2^{j(2s-2- 2 \varepsilon')}t^{2 \kappa}}
\lesssim
e^{-c  2^{j^{*}_t(2s - 2 - 2 \varepsilon')}t^{2 \kappa}} \lesssim
e^{-c  t^{\frac{\kappa}{ 3/2 + \varepsilon - s} (2s - 2 -2 \varepsilon')} t^{2 \kappa}} 
\lesssim e^{-ct^{2s\kappa}},
\ee
where: in the second inequality we used $2s - 2 - 2 \varepsilon' >0$ (which is true for all $s >1$ taking $\varepsilon'$
sufficiently small. In the third inequality we used \eqref{sttis3}. The last inequality holds as long as 
$s > \frac12 + \varepsilon$ and $\varepsilon' \ll \varepsilon$. Since we are allowed to take the $\varepsilon, \varepsilon'$
parameter arbitrarily small, the proof is complete (we recall that we always have a restriction $s > 1$ coming from 
the second inequality in \eqref{RestrOnS}).

\end{proof}

\begin{lemma}\label{lemma:subexpXH}
Let $\kappa >0$ and
\be
a:=\frac{4\kappa s}{2s-1}\,,\qquad b:=\frac{2}{2s-1} 
\ee
Then there are $C,c>0$, such that the bound
\be\label{eq:subexpXH}
\tilde\g_{s}(\|P_Nu\|_{H^s}\geq t^\k)\leq C\exp\left(-c\frac{t^{a}}{R^{b}}\right)\,,
\ee
holds for all $N \in \N $ and $t \gtrsim( \ln N)^{\frac2\kappa}$\,.
%
\end{lemma}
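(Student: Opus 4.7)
The plan is to perform a Littlewood-Paley decomposition of $\|P_N u\|_{H^s}^2$ and cut it at a frequency scale $2^{j_t}$ chosen so that the deterministic $H^{1/2}$ control $\|u\|_{H^{1/2}} \lesssim R$ (valid $\tilde\gamma_s$-a.s. by the cutoff $\mathcal{E}[u] \leq R$) absorbs the low-frequency piece, while Bernstein handles the high-frequency piece. Concretely, using $\|P_N u\|_{H^s}^2 \simeq \sum_{j \leq \log_2 N} 2^{2js}\|\Delta_j u\|_{L^2}^2$, I would set $2^{j_t} \simeq (t^{\kappa}/R)^{\frac{2}{2s-1}}$. For $j < j_t$, the identity $2^{2js}\|\Delta_j u\|_{L^2}^2 \simeq 2^{j(2s-1)} \|\Delta_j u\|_{H^{1/2}}^2$ together with $\|u\|_{H^{1/2}}^2 \lesssim R^2$ yields the deterministic estimate
$$
\sum_{j < j_t} 2^{2js}\|\Delta_j u\|_{L^2}^2 \lesssim 2^{j_t(2s-1)} R^2 \leq \tfrac{1}{2} t^{2\kappa},
$$
provided the implicit constant in the definition of $j_t$ is chosen small enough. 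This reduces the problem to estimating the probability that the high-frequency tail $\sum_{j \geq j_t} 2^{2js}\|\Delta_j u\|_{L^2}^2$ exceeds $t^{2\kappa}/2$.

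By \eqref{Def:GaussmEasure}, each dyadic block satisfies $2^{2js}\|\Delta_j u\|_{L^2}^2 \simeq 2^{-j} S_j$, with $S_j := \sum_{|n|\sim 2^j}|g_n|^2$ a chi-squared with $\simeq 2^j$ degrees of freedom (so $E[S_j] \simeq 2^j$). Introducing geometric weights $\sigma_j := 2^{-(j-j_t)}$, $\sum_{j \geq j_t}\sigma_j \lesssim 1$, a union bound gives
$$
\tilde\gamma_s\Big(\sum_{j \geq j_t} 2^{-j} S_j \geq \tfrac{1}{2} t^{2\kappa}\Big)
\leq
\sum_{j_t\leq j \leq \log_2 N} \tilde\gamma_s\big( S_j \geq c\, \sigma_j 2^j\, t^{2\kappa}\big).
$$
Since $\sigma_j 2^j t^{2\kappa} \gtrsim 2^j = E[S_j]$ in the relevant regime, Bernstein \eqref{eq_Bernstein} with $d \simeq 2^j$, $\lambda \simeq \sigma_j 2^j t^{2\kappa}$ falls into the linear branch and yields $C\exp(-c\,\sigma_j 2^j t^{2\kappa}) = C\exp(-c\, 2^{j_t} t^{2\kappa})$, the same bound for every $j$ thanks to the geometric choice of $\sigma_j$.

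Summing over the $O(\log N)$ dyadic scales produces a factor $\log N$ that is harmlessly absorbed into the exponential: the assumption $t \gtrsim (\ln N)^{2/\kappa}$ guarantees $2^{j_t} t^{2\kappa} \gg \log N$, so the resulting exponential remains the dominant term. Substituting the value of $2^{j_t}$,
$$
2^{j_t} t^{2\kappa} \simeq t^{2\kappa(1 + \frac{1}{2s-1})} R^{-\frac{2}{2s-1}} = \frac{t^{4\kappa s/(2s-1)}}{R^{2/(2s-1)}} = \frac{t^{a}}{R^{b}},
$$
and this matches \eqref{eq:subexpXH}. The main technical delicacy is verifying that the linear regime of Bernstein indeed controls every term of the sum; for the $j$ so large that $\sigma_j t^{2\kappa} < 1$ one has to fall back to the quadratic branch $\exp(-c\sigma_j^2 2^j t^{4\kappa})$, which still produces an even faster decay in $j$ and is therefore dominated by the $j = j_t$ term. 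The overall structure of the argument parallels the proof of Lemma \ref{lemma:subexp-inf}, with the exponents of $t$ and $R$ arising from the different role played by the $H^{1/2}$-to-$H^s$ interpolation factor $2^{j(2s-1)}$ in place of the embedding $L^\infty \hookrightarrow H^{1/2}$.
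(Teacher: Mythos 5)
Your overall strategy coincides with the paper's: a Littlewood--Paley split at the scale $2^{j_t} \simeq (t^\kappa/R)^{2/(2s-1)}$, with the low-frequency part absorbed deterministically by the conserved $H^{1/2}$ bound and the high-frequency part controlled via Bernstein. The low-frequency step is fine (working directly with $\ell^2$ blocks of $\|P_N u\|_{H^s}^2$ lets the $H^{1/2}$ conservation be used crisply). The gap is in the high-frequency part and comes entirely from the choice of geometric weights $\sigma_j = 2^{-(j - j_t)}$.

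With these weights the per-block threshold for $S_j$ is $c\sigma_j 2^j t^{2\kappa} = c\, 2^{j_t} t^{2\kappa}$, independent of $j$, while $E[S_j] \simeq 2^j$ grows geometrically. As soon as $j - j_t > 2\kappa \log_2 t + O(1)$ the threshold falls at or below the mean, so $\tilde\gamma_s\big(S_j \geq c\, 2^{j_t}t^{2\kappa}\big)$ is of order one rather than exponentially small, and the union bound returns nothing. You acknowledge this regime but propose to salvage it via the quadratic branch $\exp(-c\sigma_j^2 2^j t^{4\kappa})$; this fails twice over. First, $\sigma_j^2 2^j = 2^{2 j_t - j}$ decreases in $j$, so the claim that the quadratic branch produces \emph{faster} decay in $j$ is backwards. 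Second, and more fundamentally, Bernstein \eqref{eq_Bernstein} bounds $P(|S_j - E[S_j]| \geq \lambda)$ for $\lambda > 0$; once the threshold sits below $E[S_j]$ there is no positive $\lambda$ to feed in, and indeed no nontrivial exponential bound can hold since the probability in question is bounded away from zero. This bad range of $j$ is genuinely present: requiring $j_t + 2\kappa\log_2 t \geq \log_2 N$ amounts to $t^a \gtrsim N$, whereas the hypothesis supplies only $t \gtrsim (\ln N)^{2/\kappa}$, which is far weaker for large $N$.

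The paper avoids this by taking polynomially decaying weights $\sigma_j = c_0(j+1-j_t)^{-2}$. These decay slowly enough that the threshold-above-mean condition (in your $\ell^2$ formulation $\sigma_j t^{2\kappa} \gtrsim 1$, i.e.\ $t \gtrsim (j+1-j_t)^{1/\kappa}$; in the paper's $\ell^1$ formulation $\sigma_j t^{\kappa} \gtrsim 1$) holds for every $j \leq \log_2 N$ under $t \gtrsim (\ln N)^{2/\kappa}$, which is precisely where that hypothesis enters. Moreover $\sigma_j 2^j$ remains bounded below by a positive multiple of $2^{j_t}$, so each block still contributes $\lesssim \exp(-c\, 2^{j_t}t^{2\kappa})$ and the $O(\log N)$ terms are absorbed as you say. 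Replacing your geometric weights by polynomial ones repairs the argument; the remaining discrepancy between your $\ell^2$ blocks and the paper's $\ell^1$ quantity $X_N^{(s)} = \sum_j 2^{js}\|\Delta_j P_N u\|_{L^2}$ is cosmetic.
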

\begin{proof}

Let
$$
X_N^{(s)}:=\sum_{j\in\Z} 2^{js}\|\D_jP_Nu\|_{L^2}\,.
$$
We prove the statement for $X_N^{(s)}$ in place of 
$\|P_Nu\|_{H^s}$. This is sufficient since $\|P_Nu\|_{H^s} \leq X_N^{(s)}$.
Let $j_t$ the largest element of $\N \cup \{ \infty \}$
such that 
\begin{equation}\label{obvPreq}
2^{j (s-\frac12)} < \frac{t^\k}{2 R} \quad \mbox{for} \quad j < j_t \,. 
\end{equation}
We split 
\be\label{eq:X++}
X^{(s)}_N=\sum_{0\leq j < j_t} X^{(s)}_{j,N} 
 +\sum_{j \geq j_t} X^{(s)}_{j,N} \,.
\ee
The first summand is easily evaluated. Indeed
since $s > \frac12$ we have that
$$
\sum_{0\leq j < j_t} X^{(s)}_{j,N}  \leq \sum_{0 \leq j <  j_t} 2^{j(s-\frac12)} \|\D_j u\|_{\dot H^{\frac12}} < \frac{t^{\k}}{R} R = t^{\k}, 
\quad \mbox{(we used \eqref{obvPreq})}\,,
$$
holds $\tilde \g_{s}$-a.s., therefore
\be\label{eq:X-e1}
\tilde\g_{s}\Big(\sum_{0 \leq j < j_t} X^{(s)}_{j,N} \geq t^{\k}\Big)=0\,.
\ee

Let $\sigma_j$ as in \eqref{eq:sigma}. 
We have  
\begin{align}\label{eq:sub-exp-toprove}
\tilde\g_{s}( X^{(s)}_{j,N} \geq \s_j t^\k)
& =  \tilde\g_{s}( 2^{js}\|\D_jP_Nu\|_{L^2} \geq \s_jt^{\k})
\\ \nonumber
& =  \tilde\g_{s}( \|\D_jP_Nu\|^2_{L^2} \geq 2^{-2js} \s_j^2 t^{2\k} )
\,.
\end{align}
We note that
\be
E_s[\|\D_jP_Nu\|^2_{L^2}]=\sum_{\substack{ n\simeq 2^j \\ |n|\leq N}}E_s[|\hat u(n)|^2]\simeq\sum_{\substack{ n\simeq 2^j \\ |n|\leq N}} \frac{1}{n^{2s+1}}\simeq 2^{-2js}1_{\{j\leq\log_2 N\}}\,. 
\ee
Therefore
\bea
\tilde\g_{s}( \|\D_jP_Nu\|^2_{L^2} \geq 2^{-2js} \s_j^2 t^{2\k} )&\lesssim& \tilde\g_{s}( \|\D_jP_Nu\|^2_{L^2} \geq E_s[\|\D_jP_Nu\|^2_{L^2}]+ 2^{-2js} \s_j^2 t^{2\k} )\nn\\
&\lesssim& \tilde\g_{s}( |\|\D_jP_Nu\|^2_{L^2} - E_s[\|\D_jP_Nu\|^2_{L^2}]|\geq 2^{-2js} \s_j^2 t^{2\k} )
\eea
This last term can be bounded by the Bernstein inequality \eqref{eq_Bernstein}. We conclude
\begin{align}
\tilde\g_{s}( X^{(s)}_{j,N} \geq \s_j t^\k)
&\leq C\exp\left(-c\min\left(\s_j^2t^{2\k} 2^{j},\s_j^4t^{4\k}2^{j}\,\right)\right)\,.
\end{align}
Therefore for any fixed $j \geq j_t$ we have
\begin{equation}\label{Using1WC}
\tilde\g_{s}( X^{(s)}_{j,N} \geq \s_jt^{\k})\leq 2e^{-\s_j^2t^{2\k} 2^{j}}
\end{equation}
provided 
$$
t\gtrsim j^{\frac {2}\k} 
\,. 
$$
The estimate extends to all $j\geq j_t$ for 
$$
t\gtrsim \max_{j_t\leq j \lesssim \ln N} 
\gtrsim
(\ln N)^{\frac {2}\k}\,. 
$$ 
Note that by definition of $j_t$ (see \eqref{obvPreq}) we have
\begin{equation}\label{Using2WC}
2^{j_t} \gtrsim \left( \frac{t^\k}{2 R} \right)^{\frac{1}{s-\frac12}}. 
\end{equation}
Thus, using \eqref{Using1WC}-\eqref{Using2WC} we can estimate 
\bea
\tilde\g_{s}\Big(\sum_{j \geq j_t} X^{(s)}_{j,N} \geq t^\k\Big) 
&\leq& \sum_{j \geq j_t} \tilde\g_{s}\Big(X^{s}_{j,N} \geq \s_jt^\k\Big)
\nn\\
&\lesssim& \sum_{j \geq j_t} e^{-\s_j^2t^{2\k} 2^{j}}
\nn\\
&\lesssim&  e^{- t^{2\k} 2^{j_t}}
\lesssim \exp\left(-c\frac{t^{4\k s/(2s-1)}}{R^{2/(2s-1))}}\right)\,\nn
\eea
for some absolute constants $C,c>0$. 
\end{proof}

\begin{remark}
By the same proofs one can show the statements of Lemma \ref{lemma:subexp-inf} and Lemma \ref{lemma:subexpXH} for all the measures $\tilde\g_{s,M}(A)=E_s[1_{\{\mc E[P_M u]\leq R\}\cap A}]$ with $M\geq N$ (we only dealt with $M=\infty$).
\end{remark}

The small deviations of $F_N$ are evaluated as follows. First of all we note that by Proposition \ref{prop:Wick} there is $C>0$ such that for any $M \geq N \in\N$
\begin{equation}\label{Hyp}
\left\|F_N-F_M\right\|_{L^2(\g_s)}\leq \frac{C}{N^{\upsilon}}\,,\quad \upsilon:=\min(\frac12,\frac{2s-1}{4})\,. 
\end{equation}
Since $F_N$ is a trilinear form of Gaussian random variables, we see that \eqref{Hyp} implies by hyper-contractivity that for all $p>2$ there is $C>0$ (possibly different from above) for which
\be\label{eq:hyper}
\left\|F_N-F_M\right\|_{L^p(\g_s)}\leq \frac{Cp^{\frac32}}{N^{\upsilon}}\,.
\ee

\ni From \eqref{eq:hyper} we obtain the following result in the standard way

\begin{proposition}\label{prop:exp0}
Let $s>1$ and $N\in\N$. There are $C,c>0$ such that
\be\label{eq:conc1}
\g_s\left(|F_N-F|\geq t\right)\leq Ce^{-ct^{\frac23} N^{\frac{2\upsilon}{3}}}\,. 
\ee
\end{proposition}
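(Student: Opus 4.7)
The plan is a textbook Markov + optimize-in-$p$ argument, taking \eqref{eq:hyper} as the starting point.

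First, I would upgrade \eqref{eq:hyper} from a bound between $F_N$ and $F_M$ (finite $M$) to a bound between $F_N$ and $F = F_\infty$. Since $\{F_M\}_M$ is Cauchy in every $L^p(\g_s)$, $p \geq 2$ (by \eqref{eq:hyper}), it converges in $L^p(\g_s)$ to $F$, and passing to the limit $M \to \infty$ in the $L^p$ bound yields, for all $p \geq 2$,
\begin{equation}\label{eq:plan-Lp-limit}
\|F_N - F\|_{L^p(\g_s)} \leq \frac{C p^{3/2}}{N^{\upsilon}}.
\end{equation}

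Next, by Markov's inequality, for any $p \geq 2$,
\begin{equation*}
\g_s(|F_N - F| \geq t) \leq \frac{\|F_N - F\|_{L^p(\g_s)}^p}{t^p} \leq \left( \frac{C p^{3/2}}{t N^{\upsilon}} \right)^p.
\end{equation*}
Now I optimize the right-hand side in $p$. Setting $p = c_0 (t N^{\upsilon})^{2/3}$ with $c_0 > 0$ chosen small enough that $C p^{3/2} / (t N^{\upsilon}) \leq e^{-1}$ gives
\begin{equation*}
\g_s(|F_N - F| \geq t) \leq e^{-p} = \exp\!\bigl( -c_0 (t N^{\upsilon})^{2/3} \bigr) = \exp\!\bigl( - c\, t^{2/3} N^{2\upsilon/3} \bigr),
\end{equation*}
which is the desired bound, with the $C$ in the statement absorbing the restriction $p \geq 2$ (i.e.\ for $t N^{\upsilon}$ small enough that the optimal $p$ would be $<2$, the bound is trivially satisfied by adjusting $C$).

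I expect no real obstacle: the only nontrivial input is the hyper-contractivity inequality \eqref{eq:hyper}, which is already in place thanks to Proposition~\ref{prop:Wick} and the fact that $F_N$ is a homogeneous polynomial of degree three in the Gaussian variables $\{g_n\}$ (so its $L^p$ and $L^2$ norms are comparable with constant $\lesssim (p-1)^{3/2}$ by Nelson's hyper-contractivity, cf.\ the standard Gaussian chaos estimates used throughout the literature cited in the introduction). The only minor point is to check that the limit $F = \lim_{M\to\infty} F_M$ exists in $L^p(\g_s)$, but this is immediate from the Cauchy property supplied by \eqref{eq:hyper}.
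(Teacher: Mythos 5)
Your proof is correct and is precisely the standard Markov-plus-optimize-in-$p$ argument that the paper invokes without spelling out (the text simply says the proposition follows from \eqref{eq:hyper} ``in the standard way''). The passage $M\to\infty$, the Chebyshev bound, the choice $p\simeq (tN^{\upsilon})^{2/3}$, and the adjustment of $C$ to cover the regime where the optimal $p$ would fall below $2$ are all exactly what is meant there.
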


\begin{proposition}\label{prop:gamma1-smalldev}
Let $s>1$ and $t\leq N^{2\upsilon}$. There are $c,C>0$ such that
\be
\tilde\g_{s}(|F_N|\geq t)\leq Ce^{-ct}\,. 
\ee
\end{proposition}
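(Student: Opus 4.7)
The approach is to introduce an intermediate truncation level $M=M(t)$ chosen to balance the probabilistic smoothing estimate of Proposition~\ref{prop:exp0} against the deterministic estimate of Proposition~\ref{prop:energy} fed into the tail bounds of Lemmas~\ref{lemma:subexp-inf} and~\ref{lemma:subexpXH}. For $t$ bounded above by some absolute constant $t_0$, the trivial bound $\tilde\g_s(|F_N|\geq t)\leq 1$ yields the claim after enlarging $C$, so I may assume $t\geq t_0$. Setting $M:=\lfloor t^{1/(2\upsilon)}\rfloor$, the hypothesis $t\leq N^{2\upsilon}$ ensures $M\leq N$, and the triangle inequality gives
\begin{equation*}
\tilde\g_{s}(|F_N|\geq t) \;\leq\; \tilde\g_{s}(|F_N-F_M|\geq t/2) + \tilde\g_{s}(|F_M|\geq t/2).
\end{equation*}

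For the first summand I would further split $|F_N-F_M|\leq |F_N-F|+|F-F_M|$ and apply Proposition~\ref{prop:exp0} twice: with the chosen $M$ the exponent in the tail is $\simeq t^{2/3}M^{2\upsilon/3}= t$, so that this contribution is $\lesssim e^{-ct}$.

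For the second summand, Proposition~\ref{prop:energy} evaluated at level $M$ gives $|F_M|\lesssim \|P_Mu\|_{H^s}^2\|\partial_xP_Mu\|_{L^\infty}$. A direct comparison then yields
\begin{equation*}
\{|F_M|\geq t/2\}\subseteq\{\|P_Mu\|_{H^s}\gtrsim t^{(2s-1)/(4s)}\}\cup\{\|\partial_xP_Mu\|_{L^\infty}\gtrsim t^{1/(2s)}\}.
\end{equation*}
Applying Lemma~\ref{lemma:subexpXH} with $\kappa=(2s-1)/(4s)$ and Lemma~\ref{lemma:subexp-inf} with $\kappa=1/(2s)$, the exponents $a=4s\kappa/(2s-1)$ and $2s\kappa$ both equal~$1$, so each event has probability $\lesssim e^{-ct}$. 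The low-$t$ restriction $t\gtrsim(\ln M)^{2/\kappa}$ in Lemma~\ref{lemma:subexpXH} reduces, after inserting $M\simeq t^{1/(2\upsilon)}$, to $t\gtrsim(\ln t)^{\mathrm{const}}$, which is satisfied once $t_0$ is large. The main delicate point is the matching of exponents: the choices $\kappa_1=(2s-1)/(4s)$ and $\kappa_2=1/(2s)$ are forced by the simultaneous requirements that $2\kappa_1+\kappa_2=1$ (so that the deterministic splitting of the quadratic-times-linear product $\|P_Mu\|_{H^s}^{2}\|\partial_xP_Mu\|_{L^\infty}$ is consistent with a threshold of order $t$) and that both tail lemmas saturate at rate $e^{-ct}$; the fact that all three constraints close up exactly is precisely the critical scaling underlying Proposition~\ref{prop:Lpdifficile2}, and leaves no room for improvement within this framework.
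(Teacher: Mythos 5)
Your proof is correct and follows essentially the same route as the paper: truncate at $M=\lfloor t^{1/(2\upsilon)}\rfloor$, handle $|F_N-F_M|$ via Proposition~\ref{prop:exp0}, and handle $|F_M|$ by combining Proposition~\ref{prop:energy} with Lemmas~\ref{lemma:subexp-inf} and~\ref{lemma:subexpXH} at the balanced exponents $\kappa_1=(2s-1)/(4s)$, $\kappa_2=1/(2s)$ — which is exactly what the paper does, except that it cites its own Proposition~\ref{prop:gamma1-largedev} for the second summand rather than redoing the computation inline. Your extra care in disposing of small $t$ via the trivial bound and in verifying the hypothesis $t\gtrsim(\ln M)^{2/\kappa}$ of Lemma~\ref{lemma:subexpXH} is a welcome (if minor) tightening of a point the paper leaves implicit.
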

\begin{proof}
Let us set $T:=\lfloor t^{\frac1{2\upsilon}}\rfloor$ and notice that $T\leq N$. By the union bound
\be\label{eq:subexp2-use}
\tilde\g_{s}(|F_N|\geq t)\leq \g_{s}(|F_N-F_T|\geq t/2)+\tilde\g_{s}(|F_T|\geq t/2)\,. 
\ee
By Proposition \ref{prop:exp0} we have
\be
\g_{s}(|F_N-F_T|\geq t)\leq Ce^{-c t^{\frac23} T^{\frac{2\upsilon}{3}}}\leq Ce^{-ct}\,. 
\ee
On the other hand since $t \geq T^{2\upsilon}$ the estimate of Proposition \ref{prop:gamma1-largedev} 
applies to the second summand of \eqref{eq:subexp2-use}. This concludes the proof. 
\end{proof}

\ni Now we complete the proof of Proposition \ref{prop:Lpdifficile2}, combining Proposition \ref{prop:gamma1-largedev} and the following result describing larger deviations of $F_N$.

\begin{proposition}\label{prop:gamma1-largedev}
Let $s>1$ and $t\geq N^{2\upsilon}$. There are $c(R),C>0$ such that
\be
\tilde\g_{s}(|F_N|\geq t)\leq Ce^{-c(R)t}\,. 
\ee
\end{proposition}

\begin{proof}
By Proposition \ref{prop:energy}
$$
\tilde\g_{s}(|F_N|\geq t)\leq \tilde\g_{s} \left( \|P_N u\|_{H^s}^2 \|\partial_xP_N u\|_{L^{\infty}} \geq t \right)\,.
$$
Using Lemma \ref{lemma:subexp-inf} and Lemma \ref{lemma:subexpXH} we get for $\hat\k\in(0,\frac12)$
\begin{align}
\tilde\g_{s} \left( \left( \|P_N u\|_{H^s}^2 \right)^2 \, \|\partial_xP_N u\|_{L^{\infty}} \geq t \right)
&\leq \tilde\g_{s} \left( \|P_N u\|_{H^s}^2 \geq t^{\hat \k} \right) 
+ \tilde\g_{s} \left( \|\partial_xP_N u\|_{L^{\infty}} \geq t^{1-2 \hat \k} \right)\nn\\
&\leq C\exp(-c(R)t^{\hat a})\,,\label{eq:ahat12}
\end{align}
with 
\be
\hat a:=\min\left(\frac{4s}{2s-1}\hat \k\,, \ \frac{4s}{3-1}(1-2 \hat \k)\right)\,.
\ee
Now we optimize 
\begin{equation}\label{SceltaDiKappa2}
\hat \k:= \frac{2s - 1}{4s}, \qquad (\mbox{note $\hat \k \in (0,1/2)$ for $s > 1/2$});
\end{equation}
in particular $\hat a = 1$ and the proof is concluded. 
\end{proof}


\section{Quasi-invariant measures}\label{section:quasi1}

In this section we complete the proof of Theorem \ref{TH:quasi} by the method of \cite{sigma}.

Let us first introduce the set
$$
E_N:=\Span_\R\{(\cos(nx),\sin(nx))\,,\quad |n|\leq N, \quad n\neq0\} \,.
$$
Note $\dim E_N = 2N$.
We denote by $E_N^{\perp}$ the orthogonal complement of $E_N$ in the topology of $L^2(\T)$. 
Letting $\g_{s, N}^{\perp}$ the measure induced on $E_N^{\perp}$ by the map 
\begin{equation}\label{Def:gammaK2}
\varphi_s(\omega,x)=\sum_{|n| > N}\frac{g_n(\omega)}{|n|^{s+1/2}}e^{inx},
\end{equation}
the measure $ \g_{s}$ factorises over $E_N \times E_N^{\perp}$ as
\begin{equation}\label{GammaPerp}
 \g_{s} (du) := \frac{1}{Z_N} e^{-\frac12  \| P_N u \|_{H^{s+ \gamma/2}}^2 } L_N (d P_N u) \,  \g_{s, N}^{\perp}(d P_{>N} u),
\end{equation}
where $L_N$ is the Lebesgue measure induced on $E_N$ by the isomorphism between~$\R^{2N}$ and $E_N$ and 
$Z_N$ is a renormalisation factor. This factorisation is useful since we know by \cite[Lemma 4.2]{sigma} that the Lebesgue measure $L_N$ is invariant under
$\Phi_t^N P_N = P_N \Phi_t^N  $. 

The first important step toward the proof of the quasi-invariance of $\tilde\g_s$ is the following

\begin{proposition}\label{prop:quasi-invN}
Let $N \in \N$, $s>1$ and $R >0$.   
There exists $C(R)$ such that
\be\label{eq:quasi-invN}
\frac{d}{dt} \left( \tilde\g_{s}(\Phi_t^N (A)) \right)^{\frac{1}{p}} \leq C(R) p,
\ee
for all measurable set $A$ and for all $p \geq 1$.
\end{proposition}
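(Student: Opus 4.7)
The plan is to follow the standard scheme of \cite{sigma}, using the factorisation \eqref{GammaPerp} of $\g_s$ to express $\tilde\g_s(\Phi_t^N(A))$ as a weighted integral over $A$, differentiating under the integral, and then applying Proposition \ref{prop:Lpdifficile2}.

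First I would change variables. The truncated flow $\Phi_t^N$ decouples into a Hamiltonian ODE on $E_N$, which preserves the Lebesgue measure $L_N$ by Lemma~4.2 in \cite{sigma}, and a linear Fourier multiplier of modulus one on $E_N^\perp$, which preserves the complex Gaussian $\g_{s,N}^\perp$ by rotational invariance of each mode. Since the cut-off set $B_R := \{\mc E[u] \leq R\}$ is preserved by $\Phi_t^N$, these two invariances combine to give
\begin{equation*}
\tilde\g_s(\Phi_t^N(A))
= \int 1_A(v)\, 1_{B_R}(v)\, f_t(v)\, d\g_s(v),
\qquad
f_t(v) := \exp\!\Big(\tfrac{1}{2}\|P_N v\|^2_{H^{s+1/2}} - \tfrac{1}{2}\|P_N \Phi_t^N v\|^2_{H^{s+1/2}}\Big),
\end{equation*}
the only surviving factor coming from the Gaussian weight on the $E_N$-marginal after the change of variables $u=\Phi_t^N v$.

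Next I would differentiate under the integral. Since $\frac{d}{dt} f_t(v) = -\tfrac12 f_t(v)\, F_N(t,v)$ by definition of $F_N$, one gets
\begin{equation*}
\frac{d}{dt}\tilde\g_s(\Phi_t^N(A)) = -\tfrac{1}{2}\int 1_A(v)\, 1_{B_R}(v)\, f_t(v)\, F_N(t,v)\, d\g_s(v).
\end{equation*}
Reversing the change of variables $u=\Phi_t^N v$, using the group property $F_N(t,\Phi_{-t}^N u)=F_N(0,u)=:F_N(u)$ and the $\Phi_t^N$-invariance of $B_R$, this rewrites as
\begin{equation*}
\frac{d}{dt}\tilde\g_s(\Phi_t^N(A)) = -\tfrac{1}{2}\int_{\Phi_t^N(A)} F_N(u)\, d\tilde\g_s(u).
\end{equation*}

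Finally, H\"older's inequality combined with Proposition \ref{prop:Lpdifficile2} gives
\begin{equation*}
\left|\frac{d}{dt}\tilde\g_s(\Phi_t^N(A))\right|
\leq \tfrac12\, \tilde\g_s(\Phi_t^N(A))^{1-\frac{1}{p}}\, \|F_N\|_{L^p(\tilde\g_s)}
\lesssim C(R)\, p\, \tilde\g_s(\Phi_t^N(A))^{1-\frac{1}{p}}.
\end{equation*}
The chain rule $\frac{d}{dt}x^{1/p}=\frac{1}{p}x^{1/p-1}\frac{dx}{dt}$ then kills the remaining power of $\tilde\g_s(\Phi_t^N(A))$ together with the factor $p$, producing the desired bound (in fact the sharper $\lesssim C(R)$ bound).

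There is no real obstacle here: the computation is entirely formal once one grants the invariance of $L_N$ under $\Phi_t^N P_N$ and the invariance of $\g_{s,N}^\perp$ under the linear evolution on the high modes, both of which are standard. All the substantial analytic work has already been carried out in establishing Proposition \ref{prop:Lpdifficile2}; this proposition merely converts that probabilistic input into the differential inequality that will feed into the quasi-invariance argument in the remainder of the section.
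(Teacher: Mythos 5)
Your proposal is correct and follows essentially the same route as the paper: factorise $\g_s$ over $E_N\times E_N^\perp$, change variables via $\Phi_t^N$ using the invariance of $L_N$ and $\g_{s,N}^\perp$ (and of the cut-off $B_R$), differentiate in $t$ to produce $F_N$, then apply H\"older together with Proposition \ref{prop:Lpdifficile2}. The only cosmetic difference is that the paper uses the group property once at the level of sets, reducing the derivative at time $\bar t$ to a derivative at $t=0$ over $\Phi_{\bar t}^N(A)$, whereas you differentiate at general $t$ and then invoke $F_N(t,\Phi_{-t}^N u)=F_N(0,u)$ to land on the same expression $-\tfrac12\int_{\Phi_t^N(A)}F_N\,d\tilde\g_s$; these are equivalent.
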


\begin{proof}
Using the definition \eqref{Intro:eq:ref-meas}, the factorisation \eqref{GammaPerp} and Proposition 4.1 of \cite{sigma}, we have for all measurable $A$  
\begin{align}\label{STTG}
& \tilde\g_{s} \circ \Phi_t^N (A)  = \int_{\Phi_t^N (A)} \g_{s}(du)  1_{\left\{ \mc E[u] \leq R\right\}} 
\\ \nonumber
& = \int_{A}  L_N (d P_N u)   \g_{s, N}^{\perp} (d P_{>N} u)
1_{\left\{ \mc E[u] \leq R\right\}}
  \exp \left( -\frac{1}{2} \| P_N \Phi_t^N u  \|_{H^{s+\frac12}}^2  \right)  
\\ \nonumber
& =
\int_{A} \tilde \g_{s}(du)  
\exp\left(\frac12 \| P_N u \|^2_{H^{s+\frac12}}-\frac12\| P_N \Phi_t^N u\|^2_{H^{s+\frac12}}\right)
\end{align}
where we used that the Jacobian determinant is unitary
(see \cite[Lemma~4.2]{sigma}) and in the  
second identity we used $\mc E[\Phi_t^N u ] =  \mc E[u ]$; see Lemma 2.4 in \cite{sigma}. 
Since 
$$t \in (\mathbb{R}, +) \to \Phi_t^N$$ 
is a one parameter group of transformations, we can easily check that
\begin{equation}\label{EqualityAtT=0}
\frac{d}{d t} \left( \tilde\g_{s} \circ \Phi_t^N (A') \right)\Big|_{t=\bar t} = 
\frac{d}{d t}\left( \tilde\g_{s} \circ \Phi_t^N (\Phi_{\bar t}^N A' ) \right) \Big|_{t=0} \, 
\end{equation}
for all measurable $A'$.
Using \eqref{EqualityAtT=0} and \eqref{STTG} under the choice $A= \Phi_{\bar t}^N A'$, we arrive to
\begin{align}
\frac{d}{d t} & \left( \tilde\g_{s} \circ \Phi_t^N (A) \right)\Big|_{t=\bar t}
\nn \\ 
&=\frac{d}{dt}\int_{\Phi^N_{\bar t}(A)} \exp\left(\frac12 \| P_N u \|^2_{H^{s+\frac12}}-\frac12\| P_N \Phi_t^N u\|^2_{H^{s+\frac12}}\right) \tilde \g_{s}(du)\Big|_{t=0}
\nn\\ 
&=- \frac12 \int_{\Phi^N_{\bar t}(A)} \tilde\g_{s}(du) \frac{d}{dt}
\| P_N \Phi_t^N u\|^2_{H^{s+\frac12}}\Big|_{t=0}
\nn\\ \label{FirstSummand}
&=- \frac12 \int_{\Phi^N_{\bar t}(A)} \tilde\g_{s}(du) 
F_N\,.
\end{align}

By the H\"older inequality and Proposition \ref{prop:Lpdifficile2}, we get   

\begin{equation}\label{ThirdSup}
\left| \int_{\Phi^N_{\bar t}(A)} \tilde\g_{s}(du) F_N \right|
\leq C(R) \, p \, 
(\tilde\g_{s}(\Phi^N_{\bar t}(A)))^{1-\frac1p}.
\end{equation}
Thus we conclude that there is $C(R)$ such that
\be\label{eq:yud1}
\frac{d}{d t} \left( \tilde\g_{s} \circ \Phi_t^N (A) \right)\leq C(R) \, p \, (\tilde\g_{s}(\Phi^N_{t}(A)))^{1-\frac1p}\,.
\ee
From (\ref{eq:yud1}) we get \eqref{eq:quasi-invN}.
\end{proof}

We are now able to control quantitatively the growth in time of
$\tilde\g_s(\Phi_{t}(A))$.

\begin{proposition}\label{lemma:radiceNLp}
Let $s > 1$ and $R > 0$. There 
exists~$C(R)>1$ such that 
\begin{equation}\label{eq:radiceLpDifficile}
\tilde \g_s(\Phi_{t}(A)) \lesssim  
\tilde \g_s(A)^{ ( C(R)^{-|t|} )}, \qquad \forall t \in \R \,.
\end{equation} 
\end{proposition}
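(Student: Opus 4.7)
The plan is to extract from Proposition~\ref{prop:quasi-invN} a \emph{linear} differential inequality for the logarithm of $\tilde\g_s(\Phi_t^N A)$, integrate it, and then pass to the limit $N\to\infty$. By the group property $\Phi_{-t}=\Phi_t^{-1}$, it suffices to handle $t>0$. Moreover, since $\mc E$ is conserved under both $\Phi_t$ and $\Phi_t^N$, and $\Phi_t^N\to \Phi_t$ on the invariant set $\{\mc E\leq R\}$ (compact in $H^{1/2}$) by the standard well-posedness of \eqref{BBM-gamma} in $H^s$, $s>1/2$, see \cite{mammeri}, it is enough to establish the bound with $\Phi_t^N$ in place of $\Phi_t$ uniformly in $N$.

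Write $\mu(t):=\tilde\g_s(\Phi_t^N A)$ and $a(t):=\log(1/\mu(t))$. The chain rule applied to Proposition~\ref{prop:quasi-invN} yields $|\mu'(t)|\leq C(R)\,p\,\mu(t)^{1-1/p}$ for every $p\geq 1$, which in terms of $a$ reads
\begin{equation*}
-a'(t)\leq C(R)\,p\,e^{a(t)/p},\qquad \forall p\geq 1.
\end{equation*}
The key idea is to optimize this family of inequalities pointwise in $t$: when $a(t)\geq 1$, the choice $p=a(t)$ is admissible and collapses the right-hand side to $C(R)\,e\,a(t)$, producing the linear differential inequality
\begin{equation*}
-a'(t)\leq C(R)\,e\, a(t).
\end{equation*}
Integration from $0$ to $t$ gives $a(t)\geq a(0)\,e^{-C(R)\,e\, t}$; setting $C^*(R):=e^{C(R)\,e}$ this rewrites as $\mu(t)\leq \mu(0)^{C^*(R)^{-t}}$, i.e.\ $\tilde\g_s(\Phi_t^N A)\leq \tilde\g_s(A)^{C^*(R)^{-t}}$ uniformly in $N$.

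The complementary regime $a(t)<1$ (namely $\mu(t)$ close to $1$) is handled by taking the admissible value $p=1$, which gives $-a'\leq C(R)\,e^{a}\leq C(R)\,e$; this only contributes linear growth to $-a$, and thus at worst a multiplicative constant (absorbed by the symbol $\lesssim$). Finally, one passes to the limit $N\to\infty$ using the uniform convergence of $\Phi_t^N$ to $\Phi_t$ on $\{\mc E\leq R\}$ to transfer the bound from the truncated to the full flow. The main obstacle is the optimization step $p=a(t)$: it is precisely the linear-in-$p$ dependence on the right-hand side of Proposition~\ref{prop:quasi-invN} (coming from the $L^p$ bound on $F_N$ in Proposition~\ref{prop:Lpdifficile2}, which reads $\|F_N\|_{L^p(\tilde\g_s)}\lesssim C(R)p$) that makes the collapse to $-a'\lesssim a$ possible, and hence gives the double-exponential form $C^*(R)^{-t}$ in the exponent of $\tilde\g_s(A)$.
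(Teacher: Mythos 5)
Your argument is correct and arrives at the same double-exponential estimate, but by a route that differs in presentation from the paper. The paper fixes $p=\log(1/\tilde\g_s(A))$, integrates \eqref{eq:yud1} on a short time interval of length $\simeq 1/C(R)$ to obtain $\tilde\g_s(\Phi^N_t A)\leq\tilde\g_s(A)^{1/2}$, and then concatenates these short-time estimates; you instead pass to $a(t)=-\log\tilde\g_s(\Phi^N_t A)$, optimize the family of inequalities $-a'(t)\leq C(R)\,p\,e^{a(t)/p}$ pointwise in $t$ by taking $p=a(t)$, and integrate the resulting linear ODE inequality $-a'\leq C(R)e\,a$. Both strategies rest on the same key observation that the bound $\|F_N\|_{L^p(\tilde\g_s)}\lesssim C(R)p$ of Proposition~\ref{prop:Lpdifficile2} is linear in $p$, and on choosing $p\sim\log(1/\tilde\g_s(A))$; the ODE form is arguably cleaner, while the paper's iteration is more elementary and keeps the constant explicit. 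Two minor points deserve attention. First, you should quote \eqref{eq:yud1} rather than the displayed \eqref{eq:quasi-invN}: the latter contains a spurious factor of $p$ (as one sees by differentiating $\mu^{1/p}$ and using \eqref{eq:yud1}, the right-hand side of \eqref{eq:quasi-invN} should read $C(R)$, not $C(R)p$); your inequality $\mu'\leq C(R)p\,\mu^{1-1/p}$ is indeed the correct one from \eqref{eq:yud1}, but a reader tracking the numbering might be confused. Second, the justification for passing from $\Phi^N_t$ to $\Phi_t$ is imprecise: the set $\{\mc E\leq R\}$ is a closed ball in $H^{1/2}$ and therefore not norm compact, so ``uniform convergence on a compact set'' is not the right mechanism; the limit $N\to\infty$ is taken as in Section~8 of \cite{sigma}, a standard but nontrivial step that the paper also delegates. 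Finally, the regime $a(t)<1$ (where $\tilde\g_s(\Phi^N_tA)>e^{-1}$) is correctly recognized as harmless, but the verification that the desired inequality holds up to a multiplicative constant there -- because once $a$ reaches $1$ at time $t_0$ one necessarily has $a(0)e^{-C(R)et_0}\leq1$ -- is worth spelling out rather than leaving as ``at worst a multiplicative constant.''
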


\begin{proof}
We will prove that 
\begin{equation}\label{TPTWU}
\tilde \g_s(\Phi_{t}^N(A)) \lesssim  
\tilde \g_s(A)^{ ( C(R)^{-|t|} )}, \qquad \forall t \in \R \,,
\end{equation}
with   
a constant $C(R)$ which is independent on $N \in \N$. We can then promote this inequality 
to the case $N = \infty$, namely to \eqref{eq:radiceLpDifficile}, proceeding as in Section 8 of \cite{sigma}.

To prove \eqref{TPTWU} we rewrite \eqref{eq:yud1} as
$$
\frac{d}{d t} \left( \left( \tilde\g_{s} \circ \Phi_t^N (A) \right)^{\frac1p} \right) \leq C(R) 
$$
and integrating this over $[0,t]$ we get 
$$
(\tilde\g_{s} \circ \Phi_t^N )(A)\leq (C(R) |t| + \tilde\g_{s} (A)^{\frac1p})^p
$$
It will be useful to rewrite the r.h.s. as  
\be
( |t| +  \tilde \g_s (A)^{\frac1p})^p 
=   \tilde\g_s( A)\left(1+\frac{C(R) |t|}{ \tilde \g_s(A)^{\frac1p}}\right)^p
=  \tilde\g_s(A)e^{p\log\left(1+  C(R) |t|   \tilde \g_s (A)^{-\frac1p}\right)}\,.
\ee
Now we can pick
\be \label{eq:p(A)}
p=p(A) =  \log\frac{1}{\tilde\g_s(A)}\quad\mbox{in such a way that} \quad \tilde\g_s(A)^{-\frac1p} =  e\,.
\ee
Thus
\be\label{Plug1}
(\tilde \g_{s} \circ \Phi_t^N)(A)\leq  \tilde\g_s(A)e^{p \log\left(1+ C(R) e |t| \right)}
\leq  \tilde\g_s(A)e^{p \, C(R) e |t|}\,. 
\ee
Then we claim that for $|t|$ small enough
\be\label{Plug2}
e^{p \, C(R) e |t|}
\leq 
\tilde\g_s(A)^{-1/2}\,.
\ee
To have that, it must be
\be\label{ChoiceOfAlpha}
p \, C(R) e |t| \leq \frac12 \log\frac{1}{\tilde\g_s(A)} = \frac{p}{2}
\ee
which is true for $|t| \leq \frac{1}{2 e C(R)}$.
Plugging \eqref{Plug2} into \eqref{Plug1} we arrive to
\be\label{eq:quasi-quantN}
(\tilde \g_{s} \circ \Phi_t^N)(A) 
\leq  \tilde\g_s(A)^{1/2}, \qquad |t| \leq \frac{1}{2 e C(R)} \,.
\ee
Then, it is easy to see that the desired bound \eqref{eq:radiceLpDifficile} follows by 
iteration of the estimate \eqref{eq:quasi-quantN} (the constants $C(R)$ 
in \eqref{eq:radiceLpDifficile}-\eqref{eq:quasi-quantN} differs by an irrelevant factor). For details we refer to the proof of Lemma 3.3 in \cite{gauge}. 
\end{proof}

By the previous result,
the flow $\Phi_t$ maps 
zero measure sets into zero measure sets, for all $t \in \R$.
Therefore, for $s > 1$, 
we have proved that  $\tilde\g_s\circ\Phi_t$ is absolutely continuous w.r.t. $\tilde\g_s$ 
(and so w.r.t~$\g_{s}$) with a density $f_{s}(t,u) \in L^1(\tilde\g_s)$. 
We finish with a more precise evaluation of the integrability of $f_{s}$. 

\begin{proposition}\label{prop_Lp}
Let $s >1$. There exists $p = p(t, R) > 1$ such that
$f_{s}(t,u) \in L^{p}(\tilde\g_s)$.
\end{proposition}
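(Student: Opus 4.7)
The plan is to extract a polynomial tail bound on $f_{s}(t,\cdot)$ from the quantitative quasi-invariance estimate of Proposition~\ref{lemma:radiceNLp}, and then deduce $L^{p}$-integrability from the layer-cake representation. We may assume $t \neq 0$, since $f_{s}(0,u) \equiv 1$ trivially lies in every $L^{p}$.

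First I would extract a tail bound. For each $L > 0$ set $A_{L} := \{u : f_{s}(t,u) > L\}$. By the defining property of the Radon--Nikodym derivative,
\[
\tilde\g_{s}(\Phi_{t}(A_{L})) = \int_{A_{L}} f_{s}(t,u)\, d\tilde\g_{s}(u) \geq L \, \tilde\g_{s}(A_{L}) \, .
\]
On the other hand, setting $\alpha := C(R)^{-|t|} \in (0,1)$, Proposition~\ref{lemma:radiceNLp} bounds the left-hand side by a constant (depending on $t$ and $R$) times $\tilde\g_{s}(A_{L})^{\alpha}$. Combining these two inequalities and rearranging (the case $\tilde\g_{s}(A_L) = 0$ being trivial) produces the polynomial tail estimate
\[
\tilde\g_{s}\bigl( f_{s}(t,\cdot) > L \bigr) \lesssim L^{-1/(1-\alpha)} \, .
\]

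Next I would feed this into the layer-cake representation
\[
\| f_{s}(t,\cdot) \|_{L^{p}(\tilde\g_{s})}^{p} = p \int_{0}^{\infty} L^{p-1}\, \tilde\g_{s}\bigl(f_{s}(t,\cdot) > L\bigr)\, dL \, .
\]
Splitting the integral at $L=1$, the contribution on $[0,1]$ is uniformly bounded since $\tilde\g_{s}$ is a finite measure, while on $[1,\infty)$ the integrand is controlled by $L^{p-1-1/(1-\alpha)}$. The latter is integrable at infinity precisely when
\[
1 < p < \frac{1}{1 - C(R)^{-|t|}} \, .
\]
Since $C(R)^{-|t|} \in (0,1)$ for every finite $t \neq 0$, this open interval is nonempty, and any $p$ inside it gives the desired $f_{s}(t,\cdot) \in L^{p}(\tilde\g_{s})$ with $p = p(t,R) > 1$.

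The only substantive input is Proposition~\ref{lemma:radiceNLp}, which is already available; the remainder is the standard conversion of a Hölder-type bound on the transported measure into polynomial integrability of the density via a Markov-inequality argument, and I do not expect any serious obstacle in carrying this out.
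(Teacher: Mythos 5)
Your proof is correct and takes essentially the same route as the paper, which simply defers to [Proposition 3.4, gauge]: that argument is precisely the conversion of the quantitative H\"older bound of Proposition~\ref{lemma:radiceNLp} into a polynomial tail estimate on the density via the Radon--Nikodym identity and Markov's inequality, followed by the layer-cake formula. Your admissible range $1 < p < (1 - C(R)^{-|t|})^{-1}$ is consistent with the value $p = (1-e^{-|t|\ln C(R)})^{-1}$ recorded at the end of the paper's proof, which should be read as the supremum of this range rather than an attained endpoint.
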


\begin{proof}
Once we have proved the inequality \eqref{eq:quasi-invN}, 
the proof of the statement proceeds exactly as that of \cite[Proposition 3.4]{gauge} 
(the flow $\mathscr{G}$ must be replaced by $\Phi$). 
From the proof we see that $p=(1-e^{-|t| \ln C(R)})^{-1}$. 
\end{proof}


\end{document}